\DeclareMathAlphabet{\mathpzc}{OT1}{pzc}{m}{it}
\newtheorem{theorem}{Theorem}[section]
\newtheorem{proposition}[theorem]{Proposition}
\newtheorem{lemma}[theorem]{Lemma}
\newtheorem*{Th}{Theorem}
\theoremstyle{definition}
\newtheorem{definition}[theorem]{Definition}
\theoremstyle{remark}
\newcommand{\CA}{{\mathcal A}}
\newcommand{\CN}{{\mathcal N}}
\newcommand{\CO}{{\mathcal O}}
\newcommand{\CV}{{\mathcal V}}
\newcommand{\fg}{{{\mathfrak g}}}
\newcommand{\fsl}{{{\mathfrak sl}}}
\newcommand{\Fr}{\operatorname{Fr}}
\newcommand{\tK}{{\widetilde{K}}}
\newcommand{\tbV}{{\widetilde{\bV}}}
\newcommand{\tJ}{{\widetilde{ J }}}
\newcommand{\tI}{{\widetilde{ I}}}
\newcommand{\DC}{{\mathbb C}}
\newcommand{\DZ}{{\mathbb Z}}
\newcommand{\bD}{{\mathbf D}}
\newcommand{\bU}{{\mathbf U}}
\newcommand{\bV}{{\mathbf V}}
\newcommand{\bT}{{\mathbf T}}
\newcommand{\bZ}{{\mathbf Z}}
\newcommand{\ch}{{\operatorname{ch}\, }}
\newcommand{\End}{{\operatorname{End}}}
\newcommand{\Ext}{{\operatorname{Ext}}}
\newcommand{\Hom}{{\operatorname{Hom}}}
\newcommand{\Res}{{\operatorname{Res}\,}}
\newcommand{\catmod}{{\operatorname{-mod}}}
\newcommand{\im}{{\operatorname{im}\,}}
\newcommand{\ev}{\operatorname{ev}}
\newcommand{\coev}{\operatorname{coev}}
\newcommand{\id}{{\operatorname{id}}}
\newcommand{\lgl}{\langle}
\newcommand{\rgl}{\rangle}
\newcommand{\comment}[1]{}
\begin{document}

\title[]{Periodicity for subquotients of the modular category $\CO$} \author[]{Peter Fiebig}
\begin{abstract}  In this paper we study the  category $\CO$ over the hyperalgebra of a reductive algebraic group in positive characteristics. For any locally closed subset $K$ of weights we define a  subquotient $\CO_{[K]}$ of $\CO$. It has the property that its simple objects are parametrized by elements in $K$. We then show that $\CO_{[K]}$ is equivalent to $\CO_{[K+p^l\gamma]}$ for any dominant weight $\gamma$ if $l>0$ is an integer  such that $K\cap (K-p^l\eta)=\emptyset$ for all weights $\eta> 0$. Hence it is enough to understand the subquotients inside the dominant (or the antidominant) chamber.
\end{abstract}

\address{Department Mathematik, FAU Erlangen--N\"urnberg, Cauerstra\ss e 11, 91058 Erlangen}
\email{fiebig@math.fau.de}
\maketitle

\section{Introduction}

One of the cornerstones of the rational  representation theory of a reductive algebraic group $G$ over a field $k$ of characteristic $p>0$  is Steinberg's tensor product theorem. For a dominant weight $\mu$ we denote by $L(\mu)$ the irreducible representation of $G$ with highest weight $\mu$. For a dominant weight $\lambda$ there is a $p$-adic extension $\lambda=\lambda_0+p\lambda_1+p^2\lambda_2+\dots+p^l\lambda_l$ with uniquely defined restricted weights $\lambda_0,\dots,\lambda_l$, and  Steinberg's tensor product theorem states that  
$$
L(\lambda)\cong L(\lambda_0)\otimes L(\lambda_1)^{[1]}\otimes\dots\otimes L(\lambda_l)^{[l]},
$$
where $(\cdot)^{[n]}$ denotes the $n$-fold Frobenius twist. 
In order to prove the theorem, it is, by induction, sufficient to prove the following. If $\lambda_0$ is restricted and $\gamma$ is dominant, then $L(\lambda_0)\otimes L(\gamma)^{[l]}$ is a simple $G$-module (it must then be isomorphic to $L(\lambda_0+p^l\gamma)$ for weight reasons). The theorem has numerous generalizations (for non-dominant weights, quantum groups, or for representations of the hyperalgebra of $G$, see, for example, \cite{A}).

The above serves as a motivation for us to consider the functor $(\cdot)\otimes L(\gamma)^{[l]}$ on the category of modules of the hyperalgebra (or algebra of distributions) $U$ associated with $G$. This algebra coincides with the universal enveloping algebra of the Lie algebra of $G$ in the case that the ground field is of characteristic $0$, but this is not the case in positive characteristics. The category of rational representations of $G$ can be identified with the category of finite dimensional $U$-modules. As $U$ admits a triangular decomposition one can, as in the classical characteristic $0$ case, drop the finite dimensionality condition and instead consider the {\em highest weight category $\CO$}\footnote{for simplicity we only consider in this article modules with integral weights.}.  The category $\CO$ for $U$ shares many familiar properties with its characteristic $0$ relative, which was studied intensely over multiple decades (for an overview on the most essential results, see \cite{HBook}). But in the modular case it has an additional property that does not occur in characteristic $0$, and which is closely connected to Steinberg's theorem: it inhibits a periodicity structure on subquotients. 

In this article we consider subquotient categories $\CO_{[K]}$ of $\CO$ associated to locally closed subsets of the weight lattice $X$ (a locally closed subset of $X$ is a union of intervals $[\lambda,\mu]$ with respect to the usual partial order).  We then prove the following.

\begin{Th} Suppose that $K$ is a locally closed quasi-bounded subset of $X$. Suppose that $l>0$ is such that $K\cap (K+p^l\eta)=\emptyset$ for all weights $\eta>0$. Then the functor $(\cdot)\otimes L(\gamma)^{[l]}$ induces an equivalence $\CO_{[K]}\xrightarrow{\sim}\CO_{[K+p^l\gamma]}$ for all dominant weights $\gamma\in X$.
\end{Th}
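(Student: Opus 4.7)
\medskip

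\noindent\textbf{Proof plan.} The strategy follows the classical translation-functor paradigm: I would realise the asserted equivalence as a translation by $p^l\gamma$, built from the tensor product with the Frobenius-twisted module $L(\gamma)^{[l]}$, and show that the subquotient hypothesis $K\cap(K+p^l\eta)=\emptyset$ for $\eta>0$ cuts away all contributions except the one coming from the extremal weight of $L(\gamma)^{[l]}$.

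The first step is to set up the candidate functor. Since $L(\gamma)^{[l]}$ is finite dimensional with unique highest weight $p^l\gamma$ of multiplicity one, all other weights being of the form $p^l\gamma-p^l\eta$ for some $\eta>0$ in the positive root cone, the endofunctor $F:=(\cdot)\otimes L(\gamma)^{[l]}$ is exact and preserves $\CO$. Composing $F$ with the projection $\CO\twoheadrightarrow \CO_{[K+p^l\gamma]}$ and restricting to $\CO_{[K]}$ produces a candidate functor $\tilde F\colon \CO_{[K]}\to \CO_{[K+p^l\gamma]}$.

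The core of the argument is the following weight analysis. For $M\in\CO_{[K]}$ with a weight $\lambda\in K$, the module $M\otimes L(\gamma)^{[l]}$ acquires weights of the form $\lambda+p^l\gamma-p^l\eta$. Such a weight lies in $K+p^l\gamma$ precisely when $\lambda-p^l\eta\in K$; combined with $\lambda\in K$, the hypothesis then forces $\eta=0$. Hence only the one-dimensional highest-weight line of $L(\gamma)^{[l]}$ survives the projection, and $\tilde F$ induces a bijection between the weight supports of $M$ and of $\tilde F(M)$ via $\lambda\mapsto\lambda+p^l\gamma$.

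For the quasi-inverse I would take $\tilde G\colon \CO_{[K+p^l\gamma]}\to \CO_{[K]}$ obtained in the same way from $G:=(\cdot)\otimes L(\gamma)^{[l],\ast}$, where $L(\gamma)^{[l],\ast}\cong L(-w_0\gamma)^{[l]}$ has lowest weight $-p^l\gamma$. A symmetric weight calculation, again invoking the hypothesis on $K$, shows that only the one-dimensional lowest-weight line of $L(\gamma)^{[l],\ast}$ contributes to the projection onto $\CO_{[K]}$.

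Finally, the coevaluation $k\to L(\gamma)^{[l]}\otimes L(\gamma)^{[l],\ast}$ and evaluation $L(\gamma)^{[l],\ast}\otimes L(\gamma)^{[l]}\to k$ give natural transformations $\id\to GF$ and $FG\to \id$ on $\CO$. The essential claim is that, after passing to the subquotient categories, these become isomorphisms: every contribution to $M\otimes L(\gamma)^{[l]}\otimes L(\gamma)^{[l],\ast}$ arising from a non-extremal weight of the middle tensor factor $L(\gamma)^{[l]}\otimes L(\gamma)^{[l],\ast}$ is annihilated by the projection to the relevant subquotient, via a double application of the weight analysis above. I expect this last verification to be the main technical hurdle, since it requires filtering the threefold tensor product by weights of $L(\gamma)^{[l]}\otimes L(\gamma)^{[l],\ast}$ and checking that every non-trivial step of the filtration falls outside the designated locally closed set; this is precisely where the strength of the hypothesis $K\cap(K+p^l\eta)=\emptyset$ is fully exploited, and also where compatibility of the subquotient construction with the tensor product must be carefully handled.
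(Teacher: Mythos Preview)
Your overall strategy---tensor with $L(\gamma)^{[l]}$ and its dual, use the evaluation/coevaluation maps to build natural transformations, and invoke the gap hypothesis $K\cap(K+p^l\eta)=\emptyset$ to kill all but the extremal contribution---is exactly the mechanism the paper uses. So the core idea is right.

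Where your plan diverges from the paper, and where it becomes vague, is in \emph{where} you run the argument. You propose to work directly in the abelian subquotient $\CO_{[K]}$. The paper instead first proves the equivalence on the exact subcategory $\CV(K)$ of objects admitting a Verma flag with highest weights in $K$, and only then passes to $\CO_{[K]}$ by recognising it as the abelianisation of $\CV(K)$. This detour buys two things you are missing. First, the inverse functor: tensoring with the dual $L(\gamma)^{[l],\ast}$ does \emph{not} send $\CO^{\tilde J}$ into $\CO^{J}$ (the highest weight of the dual is $-p^lw_0\gamma$, not $-p^l\gamma$), so your $\tilde G$ is not well-defined without an additional truncation $(\cdot)^J$. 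The paper's functor $\bD=((\cdot)\otimes\Gamma)^J$ includes this truncation, and the point of restricting to $\CV$ is that truncation is \emph{exact} there (this is their Proposition on exactness of $(\cdot)_I$ and $(\cdot)^J$ on $\CV$), so $\bD$ remains exact. Second, checking that the unit and counit are isomorphisms: on $\CV(K)$ both composites are exact, so it suffices to check on Verma modules, where one already knows $\bD\bU\Delta(\mu)\cong\Delta(\mu)$ and only needs to verify the map is nonzero---a one-line computation. Your proposed ``filtration of the threefold tensor product by weights of $L(\gamma)^{[l]}\otimes L(\gamma)^{[l],\ast}$'' is not a filtration by $U$-submodules, and without the Verma-flag reduction it is unclear how you would conclude.

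A smaller point: your ``weight analysis'' speaks of weights $\lambda\in K$ of an object $M\in\CO_{[K]}$, but a representative of $M$ in $\CO^J$ will typically have many weights in $J'=J\setminus K$. The relevant invariant is the set of highest weights of Verma (or simple) subquotients, not the weight support of $M$ itself; this is again what the passage through $\CV(K)$ makes precise.
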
 
Apart from Steinberg's tensor product theorem  also the periodicity of $(p,\Delta)$-filtrations (cf. Corollary 4.3 in \cite{A}) could find a conceptual foundation in the above theorem. 
  
Note that if $K$ is finite, then an integer $l$ as required in the theorem always exists. Once such an $l$ is fixed, we still can choose an arbitrary dominant weight $\gamma$ for the statement to hold. This shows that we can transfer the subquotient $\CO_{[K]}$ deep inside the dominant chamber, or, by reversing the statement, deep inside the antidominant chamber. In both cases, the representation theory has some valuable additional features. By \cite{A}, projective covers exist in $\CO$ for the simple objects $L(\lambda)$ for antidominant weights $\lambda$, while the tilting modules $T(\lambda)$  exist in $\CO$ for all dominant $\lambda$. 

We use this opportunity to also state and prove some basic results on the modular category $\CO$ for future reference, such as Krull-Remak-Schmidt decompositions, the existence of projectives in truncated subcategories, BGGH-reciprocity, and some additional results on modules admitting a Verma flag. Most of these results are well-known in the characteristic $0$ case and the proofs can be found in \cite{HBook}. If the proof in the modular case does not need adjustment, we will simply refer to the appropriate result in \cite{HBook}. We give a more detailed proof if some adjustments are required.

\section{The modular category $\CO$}
Let $\fg$ be a semisimple complex Lie algebra  with root system $R$. For $\alpha\in R$ we denote by $\alpha^\vee$ the associated coroot, and by $X$ the weight lattice.  We fix a basis $\Pi\subset R$ and  denote by $R^+\subset R$ the corresponding system of positive roots.  Then we denote by $\le$ the induced partial order on $X$, i.e. $\mu\le\lambda$ if and only if $\lambda-\mu$ can be written as a sum of elements of $R^+$. We let $U_\DC=U(\fg)$ be the universal enveloping algebra of $\fg$.

\subsection{Kostant's integral form} 
Let $\{e_\alpha,f_\alpha,h_\beta\mid\alpha\in R^+,\beta\in \Pi\}$ be a Chevalley basis of $\fg$. By $U_\DZ\subset U_\DC$ we denote Kostant's integral form (cf. \cite{K}). Recall that $U_\DZ$ is the unital subring of $U_\DC$ that is generated by the elements $e_\alpha^{(n)}:=e_\alpha^n/n!$, $f_\alpha^{(n)}:=f_\alpha^n/n!$ and ${{h_\beta}\choose n}:=\frac{h_\beta(h_{\beta}-1)\dots(h_{\beta}-n+1)}{n!}$ with $n> 0$, $\alpha\in R^+$ and $\beta\in\Pi$. We denote by $U_\DZ^+$ the subring generated by the $e_{\alpha}^{(n)}$'s, by $U_\DZ^-$ the subring generated by the  $f_{\alpha}^{(n)}$'s, and by $U_\DZ^0$ the subring generated by the ${{h_\beta}\choose n}$'s. The following integral version of the PBW-theorem is one of the main results in \cite{K}.

\begin{theorem} \label{thm-PBW}
\begin{enumerate}
\item The algebra $U_\DZ^+$ is free over $\DZ$, and the elements $\prod_{\alpha\in R^+}e_\alpha^{(n_\alpha)}$ with $n_\alpha\ge 0$ form a basis.
\item The algebra $U_\DZ^-$ is free over $\DZ$, and the elements $\prod_{\alpha\in R^+}f_\alpha^{(n_\alpha)}$ with $n_\alpha\ge 0$ form a basis.
\item The algebra $U_\DZ^0$ is free over  $\DZ$, and the elements $\prod_{\beta\in \Pi}{{h_\beta}\choose{n_\beta}}$  with $n_\beta\ge 0$  form a basis. 
\item The multiplication defines an isomorphism 
$$
U_\DZ^-\otimes_\DZ U_\DZ^0\otimes_\DZ U_\DZ^+\xrightarrow{\sim} U_\DZ
$$
of $\DZ$-modules. 
\end{enumerate}
\end{theorem}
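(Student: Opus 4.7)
The plan is to derive all four statements from the classical PBW theorem for $U_\DC=U(\fg)$, together with a package of divided-power commutation identities that carry integer coefficients. Linear independence of each proposed basis is immediate: by the classical PBW theorem over $\DC$, the ordinary monomials $\prod_{\alpha\in R^+} e_\alpha^{n_\alpha}$, $\prod_{\alpha\in R^+} f_\alpha^{n_\alpha}$, and $\prod_{\beta\in\Pi} h_\beta^{n_\beta}$ are $\DC$-linearly independent, and the divided-power monomials in the statement differ from them only by rational scalars and strictly lower-order correction terms (in particular, $\binom{h_\beta}{n}$ is a polynomial of degree exactly $n$ in $h_\beta$ with leading coefficient $1/n!$). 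Consequently, the multiplication map in (4) becomes an isomorphism after base change to $\DC$, so it is injective over $\DZ$; it remains only to prove the spanning statements in (1)--(4).

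For (1) and (2), I fix a total order $\alpha_1<\cdots<\alpha_N$ on $R^+$ and show by induction on an appropriate measure of disorder of a word in divided-power generators (say, lexicographic in total weight and number of inversions) that every such word lies in the $\DZ$-span of ordered monomials $\prod_i e_{\alpha_i}^{(n_i)}$. The key inductive step is a commutation identity that rewrites $e_\alpha^{(n)} e_\beta^{(m)}$, for $\alpha>\beta$, as $e_\beta^{(m)} e_\alpha^{(n)}$ plus a $\DZ$-linear combination of ordered products with strictly fewer factors, whose constituent divided powers are of positive roots appearing in the $\alpha,\beta$-string. Integrality of the coefficients reduces to a rank-two calculation inside the Chevalley subalgebra generated by $\{e_\alpha, e_\beta\}$ and follows from the integrality of the structure constants $N_{\alpha,\beta}$ together with a formal Leibniz-type identity for divided powers. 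The argument for $U_\DZ^-$ is identical.

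For (3), the generators $\binom{h_\beta}{n}$ mutually commute across $\beta\in\Pi$, so spanning reduces to the single-variable statement that $\{\binom{h}{n}\}_{n\ge 0}$ is a $\DZ$-basis for the ring of numerical polynomials in one variable $h$; this is a classical finite-difference argument, combined with the Vandermonde-type product identity
$$
\binom{h}{n}\binom{h}{m}=\sum_{k}\binom{n}{k}\binom{n+m-k}{n}\binom{h}{n+m-k},
$$
whose coefficients are integers. For (4), spanning of $U_\DZ$ by $U_\DZ^-\cdot U_\DZ^0\cdot U_\DZ^+$ is then obtained by repeatedly moving any $f^{(m)}_\alpha$ to the left of any $e^{(n)}_\beta$: for $\alpha\ne\beta$ the case is handled by the rank-two identities above, while for $\alpha=\beta$ one uses Kostant's identity
$$
e_\alpha^{(n)} f_\alpha^{(m)}=\sum_{k\ge 0} f_\alpha^{(m-k)}\binom{h_\alpha-m-n+2k}{k}e_\alpha^{(n-k)},
$$
which lives in the $\fsl_2$-triple generated by $\{e_\alpha,f_\alpha,h_\alpha\}$ and has coefficients lying in $U_\DZ^0$.

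The main obstacle across all four parts is the systematic verification that the commutation formulas invoked above really carry $\DZ$-coefficients. I would isolate this as a self-contained combinatorial lemma proved by reducing to two model cases: the $\fsl_2$-calculation for the $e^{(n)}_\alpha f^{(m)}_\alpha$ identity (proved by induction on $\min(n,m)$), and the rank-two subalgebras of $\fg$ generated by two root vectors for the reordering formulas in (1), (2), and the mixed case of (4). Once those integrality statements are established, the inductive bookkeeping in each of the four parts is routine.
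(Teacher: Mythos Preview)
The paper does not prove this theorem: it is quoted as one of the main results of \cite{K} and used as a black box throughout. Your sketch is essentially the strategy of Kostant's original proof---linear independence descends from the complex PBW theorem, and spanning is obtained from a package of divided-power commutation identities with integral coefficients, the central one being precisely the $e_\alpha^{(n)}f_\alpha^{(m)}$ formula you wrote down. The point you correctly identify as the main obstacle, namely the integrality of the reordering formulas for $e_\alpha^{(n)}e_\beta^{(m)}$ with $\alpha\ne\beta$, is also the technical heart of \cite{K}; Kostant establishes it by a careful induction inside the nilpotent subalgebra rather than by a clean reduction to rank-two subsystems, so your proposed reduction would need to be checked with some care (the subalgebra generated by $e_\alpha,e_\beta$ can involve many positive roots, and the resulting straightening formula is not literally a rank-two statement), but the overall architecture is sound.
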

The products in parts (1) and (2)  above should be taken with respect to a fixed, but arbitrary order on $R^+$. The algebra $U^0_\DZ$ is commutative, so no order is needed. 

Note that $U_\DZ$ can be considered as the algebra of distributions of the semisimple and simply connected  $\DZ$-group scheme $G_\DZ$ associated with $R$ (cf. Section II.1.12 in \cite{J}).  It inherits a Hopf algebra structure from $U_\DC$.

\subsection{The modular category $\CO$} 
From now on we fix a field $k$ of characteristic $p>0$. We set $U:=U_\DZ\otimes_\DZ k$ and define $U^+$, $U^-$, $U^0$ likewise. As before, we can consider $U$ as the algebra of distributions of the group scheme $G_k$. It inherits a Hopf algebra structure from $U(\fg)$ via $U_\DZ$. The category of $U$-modules hence obtains a tensor product structure $(\cdot)\otimes(\cdot)$. 

To any $\lambda\in X$ we can associate a character $\chi_\lambda\colon U^0\to k$ that maps ${h_\beta}\choose{n}$ to the image of ${{\lgl\lambda,\beta^\vee\rgl}\choose n}$ in $k$.
For a $U^0$-module $M$ and $\lambda\in X$ we define
$$
M_\lambda:=\{m\in M\mid h.m=\chi_\lambda(h) m\text{ for all $h\in U^0$}\}
$$
and call this the {\em $\lambda$-weight space} of $M$. We say that $M$ is a {\em weight module} if $M=\bigoplus_{\lambda\in X}M_\lambda$, and we say that $\lambda$ is a {\em weight of $M$} if $M_\lambda\ne0$. We say that  a $U^+$-module $M$ is {\em $U^+$-locally finite}  if $M$ is the union of its finite dimensional $U^+$-submodules. 

\begin{definition} 
The category $\CO$ is the full subcategory of the category of  $U$-modules that contains all objects that are weight modules and $U^+$-locally finite. 
\end{definition}
If $M$ is an object in $\CO$, then every submodule and every quotient of $M$ is also contained in $\CO$, and  $\CO$ is an abelian category. It is easy to check that $\CO$ is stable under the tensor product on $U$-modules. 

\subsection{Verma modules and simple quotients}
The algebra  $U^0$ normalizes $U^+$, so    $U^{\ge 0}:=U^0U^+\subset U$ is a subalgebra. 
For any $\lambda\in X$ we denote by $k_\lambda$ the one-dimensional $U^{\ge0}$-module on which $U^0$ acts via the character $\chi_\lambda$, and $U^{+}$ acts via the augmentation $U^{+}\to k$ (that sends all generators $e_{\alpha}^{(n)}$ to $0$ for $n>0$). The induced $U$-module
$$
\Delta(\lambda):=U\otimes_{U^{\ge 0}} k_\lambda
$$
is called the {\em Verma module with highest weight $\lambda$}. We denote by $v_\lambda=1\otimes 1$ the obvious generator of $\Delta(\lambda)$. The following proposition collects the basic facts about Verma modules. The arguments for the proofs are standard and do not depend on the characteristic of $k$. The characteristic $0$ version can be found in \cite{HBook}.  
\begin{proposition}\label{prop-basicVerma}
\begin{enumerate}
\item For each $\lambda\in X$ the $U$-module $\Delta(\lambda)$ is contained in $\CO$.
\item For any object $M$ in $\CO$, the map $\Hom_\CO(\Delta(\lambda),M)\to M_\lambda$, $f\mapsto f(v_\lambda)$ is injective with image $\{m\in M_\lambda\mid e^{(n)}_\alpha.m=0\text{ for all $\alpha\in R^+$, $n>0$}\}$. 
\item For each $\lambda\in X$ there exists a unique simple quotient $L(\lambda)$ of $\Delta(\lambda)$ in $\CO$.
\item The set $\{L(\lambda)\}_{\lambda\in X}$ is a full set of representatives  for the simple isomorphism classes of $\CO$. 
\end{enumerate}
\end{proposition}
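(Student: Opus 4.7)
The plan is to treat the four parts in order, using the integral PBW theorem as the main structural input, together with the universal property of the induced module $\Delta(\lambda) = U\otimes_{U^{\ge 0}}k_\lambda$.

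For (1), I would first invoke Theorem \ref{thm-PBW}(4) to see that $U\cong U^-\otimes_k U^0 \otimes_k U^+$ as a right $U^{\ge 0}$-module via multiplication (this transfers from $U_\DZ$ to $U$ by base change). Hence $\Delta(\lambda) \cong U^-\otimes_k k_\lambda$ as a $k$-vector space, and Theorem \ref{thm-PBW}(2) gives the explicit basis $\{\prod_{\alpha\in R^+}f_\alpha^{(n_\alpha)}\cdot v_\lambda\}$. Because each basis vector is a weight vector (the weight is $\lambda - \sum_\alpha n_\alpha \alpha$, which one checks by a direct computation using the commutation relations between $U^0$ and the $f_\alpha^{(n)}$), $\Delta(\lambda)$ is a weight module, its weights all lie in $\lambda - \DN_{\ge 0} R^+$, and each weight space is finite dimensional. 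For $U^+$-local finiteness, pick any $v\in \Delta(\lambda)$; it lies in the sum of finitely many weight spaces $\Delta(\lambda)_\mu$ with $\mu\le\lambda$, and since $U^+$ strictly raises weights and there are only finitely many $\nu$ with $\mu\le\nu\le\lambda$ and $\Delta(\lambda)_\nu\neq 0$, the module $U^+\cdot v$ is contained in a finite-dimensional subspace.

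For (2), this is formal Frobenius reciprocity: $\Hom_U(U\otimes_{U^{\ge 0}}k_\lambda,M)=\Hom_{U^{\ge 0}}(k_\lambda,M)$, and a $U^{\ge 0}$-linear map out of $k_\lambda$ is determined by the image of $1$, which must be a vector $m\in M_\lambda$ killed by all $e_\alpha^{(n)}$, $n>0$. The only subtlety is that $M$ is a priori only a $U$-module, but as $\Delta(\lambda)\in\CO$ by (1) and $\CO$ is closed under quotients, the image already lies in $\CO$, so $\Hom_U=\Hom_\CO$.

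For (3), the standard highest-weight argument works: any proper submodule $N\subsetneq \Delta(\lambda)$ satisfies $N_\lambda=0$, because otherwise $N$ contains $v_\lambda$ and hence all of $\Delta(\lambda)$. Consequently the sum of all proper submodules still has trivial $\lambda$-weight space and is therefore a unique maximal proper submodule; the quotient is the desired $L(\lambda)$.

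For (4), simple $L\in\CO$ must be some $L(\lambda)$: take any nonzero weight vector $v\in L$, form the finite-dimensional $U^+$-submodule $V=U^+\cdot v$, and pick a weight $\lambda$ of $V$ that is maximal among the (finitely many) weights of $V$. A nonzero $w\in V_\lambda$ is then annihilated by all $e_\alpha^{(n)}$ with $n>0$, since $e_\alpha^{(n)}w\in V_{\lambda+n\alpha}=0$. By (2) we get a nonzero homomorphism $\Delta(\lambda)\to L$, which is surjective by simplicity, and hence $L\cong L(\lambda)$ by the uniqueness in (3). Finally, $L(\lambda)\cong L(\mu)$ forces $\lambda=\mu$, because $\lambda$ is characterized intrinsically as the unique maximal weight of $L(\lambda)$ (inherited from $\Delta(\lambda)$). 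No single step is a serious obstacle; the only point where one must be slightly careful is to verify that the integral PBW decomposition of $U_\DZ$ descends to $U=U_\DZ\otimes_\DZ k$ and that the weights of the PBW basis vectors in $\Delta(\lambda)$ are the expected ones, which reduces to the commutation identities between ${{h_\beta}\choose n}$ and $f_\alpha^{(m)}$.
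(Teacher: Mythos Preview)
Your proposal is correct and follows exactly the standard arguments the paper has in mind: the paper does not actually write out a proof, but simply states that ``the arguments for the proofs are standard and do not depend on the characteristic of $k$'' and refers to \cite{HBook}. What you have written is precisely the characteristic-free version of those arguments (PBW for (1), Frobenius reciprocity for (2), the unique maximal submodule via the $\lambda$-weight space for (3), and existence of a maximal weight vector via $U^+$-local finiteness for (4)), so there is nothing to add.
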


\subsection{Finite dimensional simple modules}
Denote by $X^+=\{\lambda\in X\mid \lgl\lambda,\alpha^\vee\rgl\ge 0\text{ for all $\alpha\in \Pi$}\}$ the set of {\em dominant weights}. For a dominant weight $\lambda$ we denote by $V(\lambda)$ the {\em Weyl module} with highest weight $\lambda$. It is constructed as follows. We denote by $L_\DC(\lambda)$ the simple, finite dimensional $U_\DC$-module with highest weight $\lambda$ and choose a non-zero vector $v\in L_\DC(\lambda)$. The integral Weyl module is then defined by $V_\DZ(\lambda)=U_\DZ.v\subset L_\DC(\lambda)$. It turns out that this is a free abelian group of finite rank inside $L_\DC(\lambda)$ and $V_\DZ(\lambda)\otimes_\DZ\DC=L_\DC(\lambda)$. The Weyl module in the modular case is then $V(\lambda)=V_\DZ(\lambda)\otimes_\DZ k$. It is a finite dimensional $U$-module with highest weight $\lambda$ and it is contained in $\CO$.   
\begin{lemma} The object $L(\lambda)$ is finite dimensional if and only if $\lambda$ is a dominant weight.
\end{lemma}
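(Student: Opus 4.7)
I would treat the two implications separately.

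For the dominant $\Rightarrow$ finite-dimensional direction, the Weyl module $V(\lambda)$ recalled just above is finite-dimensional, lies in $\CO$, and contains a nonzero weight vector of weight $\lambda$ annihilated by every $e_\alpha^{(n)}$ with $n>0$ (it is, by construction, a highest weight vector). By Proposition~\ref{prop-basicVerma}(2) this produces a nonzero homomorphism $\Delta(\lambda)\to V(\lambda)$ sending $v_\lambda$ to that vector, and this map is surjective because its image contains a generator of $V(\lambda)$. Consequently $L(\lambda)$, being the unique simple quotient of $\Delta(\lambda)$, appears as a simple subquotient of $V(\lambda)$, so $\dim L(\lambda)\leq\dim V(\lambda)<\infty$.

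For the converse, fix a simple root $\alpha\in\Pi$ and assume $\dim L(\lambda)<\infty$. Because $L(\lambda)$ has only finitely many weights and $e_\alpha^{(n)}$ (resp.\ $f_\alpha^{(n)}$) shifts them by $+n\alpha$ (resp.\ $-n\alpha$), the operators $e_\alpha^{(n)}$ and $f_\alpha^{(n)}$ act as zero on $L(\lambda)$ for all sufficiently large $n$. One can therefore introduce the finite sums
$$
\exp(e_\alpha)=\sum_{n\geq 0}e_\alpha^{(n)},\qquad \exp(-f_\alpha)=\sum_{n\geq 0}(-1)^n f_\alpha^{(n)}
$$
as well-defined endomorphisms of $L(\lambda)$ and form the Weyl element $n_\alpha:=\exp(e_\alpha)\exp(-f_\alpha)\exp(e_\alpha)$. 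A direct calculation in the rank-one hyperalgebra attached to $\alpha$ — or, more conceptually, the observation that a finite-dimensional object of $\CO$ is integrable and hence a rational $G_k$-module, combined with the Weyl-group invariance of characters of rational $G_k$-modules (cf.~\cite{J}, Part~II) — shows that $n_\alpha$ restricts to an isomorphism $L(\lambda)_\mu\xrightarrow{\sim}L(\lambda)_{s_\alpha\mu}$ for every weight $\mu$. In particular $s_\alpha\lambda=\lambda-\lgl\lambda,\alpha^\vee\rgl\,\alpha$ occurs as a weight of $L(\lambda)$. Since $L(\lambda)$ is a quotient of $\Delta(\lambda)$, every one of its weights has the form $\lambda-\sum_{\beta\in\Pi}n_\beta\beta$ with $n_\beta\in\DZ_{\geq 0}$, so $s_\alpha\lambda\leq\lambda$ and therefore $\lgl\lambda,\alpha^\vee\rgl\geq 0$. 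Letting $\alpha$ range over $\Pi$ yields $\lambda\in X^+$.

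The main technical point is the assertion that $n_\alpha$ permutes weight spaces according to the simple reflection $s_\alpha$. In the divided-power/hyperalgebra framework this is slightly more delicate than the classical characteristic-zero statement because one must check the effect of $n_\alpha$ on the generators $\binom{h_\beta}{r}$ of $U^0$ rather than merely on the $h_\beta$'s; the shortcut via the identification of finite-dimensional objects of $\CO$ with finite-dimensional rational $G_k$-modules bypasses this bookkeeping and makes the argument essentially immediate.
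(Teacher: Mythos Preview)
Your forward direction coincides with the paper's: both exhibit $V(\lambda)$ as a finite-dimensional quotient of $\Delta(\lambda)$, forcing $L(\lambda)$ to be finite-dimensional.

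For the converse the two arguments diverge. The paper proceeds by contrapositive: given $\lgl\lambda,\alpha^\vee\rgl<0$ for some simple $\alpha$, it restricts $L(\lambda)$ to the rank-one hyperalgebra $U^\alpha$ and invokes the explicit classification of its highest weight modules (referencing \cite{A}) to see that the $\alpha$-string $\bigoplus_{n\ge 0}L(\lambda)_{\lambda-n\alpha}$ is infinite-dimensional. Your argument instead extracts Weyl-group invariance of the set of weights of a finite-dimensional module, either via the element $n_\alpha$ built from divided-power exponentials or by passing to the category of rational $G_k$-modules, and then uses $s_\alpha\lambda\le\lambda$ to force $\lgl\lambda,\alpha^\vee\rgl\ge 0$. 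Both are sound; the paper's route stays entirely inside the hyperalgebra and needs only the $\fsl_2$-structure theory, while yours is conceptually cleaner (character symmetry is the ``real'' reason) but, as you note yourself, requires either a careful check of how $n_\alpha$ conjugates the generators ${h_\beta\choose r}$ of $U^0$ or the identification of finite-dimensional weight modules over $U$ with rational $G_k$-modules --- a nontrivial input in its own right.
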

\begin{proof} If $\lambda$ is a dominant weight, then the Weyl module $V(\lambda)$ is a finite dimensional  $U$-module with highest weight $\lambda$ and it contains $L(\lambda)$ as a quotient. Hence $L(\lambda)$ is finite dimensional. If $\lambda$ is not dominant, then there is a simple root $\alpha$ such that $\lgl\lambda,\alpha^\vee\rgl < 0$. We consider the $\alpha$-string in  $L(\lambda)$ through the highest weight: $\bigoplus_{n\ge 0}L(\lambda)_{\lambda-n\alpha}$. This is a highest weight module for the subalgebra $U^{\alpha}$ of $U$ generated by the $e_{\alpha}^{(n)}$, $f_{\alpha}^{(n)}$, ${h_{\alpha}\choose {n}}$ and $n\ge 0$. The algebra $U^{\alpha}$ is the algebra of distributions associated to the Lie algebra $\fsl_2(\DC)$ and the field $k$. The structure of the highest weight modules can be worked out explicitely (for example, see \cite{A}). The fact that $\lgl\lambda,\alpha^\vee\rgl < 0$ implies that $\bigoplus_{n\ge 0}L(\lambda)_{\lambda-n\alpha}$ must be infinite dimensional, and hence so is $L(\lambda)$. \end{proof}

\subsection{Characters} 
Suppose that $M$ is an object in $\CO$ that has the property that its set of weights  is bounded, i.e. there exists some $\gamma\in X$ such that  $M_\lambda\ne 0$ implies $\lambda\le\gamma$. Suppose furthermore that each weight space of $M$ is finite dimensional. Then one can define the character of $M$ as
$$
\ch  M=\sum_{\lambda\le\gamma}(\dim_k M_\lambda)e^{\lambda}\in\widehat{\DZ[X]},
$$
where $\widehat{\DZ[X]}$ is a suitable completion of the group ring $\DZ[X]$.  This applies to any finitely generated object in $\CO$ and, in particular, to the simple objects $L(\lambda)$, so there is a well-defined character $\ch  L(\lambda)$. Clearly, $\ch  L(\lambda)\in e^{\lambda}+\sum_{\mu<\lambda} \DZ_{\ge 0}  e^{\mu}$. Note that for any $\nu,\gamma\in X$ the interval $[\nu,\gamma]$ is finite. If $M$ is as above, there are well defined numbers $a_\mu\in \DZ_{\ge 0} $ such that
$$
\ch  M=\sum_{\mu\in X}a_\mu \ch  L(\mu).
$$
We write $(M: L(\mu))=a_\mu$ and call this the {\em multiplicity of $L(\mu)$ in $M$}.

\subsection{A duality on $\CO$}
Recall that there is an antiautomorphism $\sigma$ on $U_\DC$ that maps $e_{\alpha}$ to $f_\alpha$ and $h_\alpha$ to $h_\alpha$. It hence leaves $U_\DZ$ stable, so we obtain an antiautomorphism on $U$ that we denote by the same symbol. For a $U$-module $M$ we denote by $M^{\sigma}$ its twist by $\sigma$. Then $(M^\sigma)_\lambda=M_\lambda$ as a vector space, so the $\sigma$-twist preserves weight modules. For a $U$-module $M$ we denote by $M^\ast=\bigoplus_{\lambda\in X}\Hom_k(M_\lambda,k)$ the restricted linear dual. It acquires the structure of a $U$-module by setting $(x.\phi)(m)=\phi(\sigma(x).m)$ for all $m\in M$, $\phi\in M^\ast$ and $x\in U$. 

\begin{lemma} Suppose that  $M$ is an object in $\CO$ with finite dimensional weight spaces and such that  its set of weights is bounded from above .  Then $M^\ast$ is an object in $\CO$ as well and we have a functorial  identification $(M^{\ast})^\ast=M$. 
\end{lemma}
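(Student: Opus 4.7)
The plan is to verify the three conditions defining $\CO$ for $M^{\ast}$, and then to establish the natural isomorphism $(M^\ast)^\ast \cong M$ by combining finite-dimensionality of weight spaces with the fact that $\sigma$ is an involution.

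First I would check that $M^\ast$ is a weight module with $(M^\ast)_\lambda = \Hom_k(M_\lambda,k)$. Since $\sigma$ restricts to the identity on the generators ${h_\beta\choose n}$ of $U^0_\DZ$ (which is clear from $\sigma(h_\beta)=h_\beta$ and the fact that ${h_\beta\choose n}$ is a polynomial in $h_\beta$), the $U^0$-action on $M^\ast$ via the twisted formula agrees with the naive dual action, so weight spaces decompose as claimed. In particular the set of weights of $M^\ast$ equals that of $M$ and is therefore still bounded above by $\gamma$, with finite-dimensional weight spaces by linear duality.

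The key condition to verify is $U^+$-local finiteness of $M^\ast$. Take $\phi \in (M^\ast)_\lambda$ and consider the $U^+$-submodule $U^+.\phi$. Since $U^+$ is generated by the $e_\alpha^{(n)}$, which shift weights by positive multiples of positive roots, every weight of $U^+.\phi$ is of the form $\lambda+\nu$ with $\nu$ a non-negative integral sum of elements of $R^+$. But $(M^\ast)_{\lambda+\nu}=0$ unless $\lambda+\nu\le\gamma$, i.e.\ $\nu\in[0,\gamma-\lambda]$, and this interval in $X$ is finite. Combined with $\dim_k(M^\ast)_{\lambda+\nu}=\dim_k M_{\lambda+\nu}<\infty$, this shows $U^+.\phi$ is finite-dimensional. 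Extending by linearity, any element of $M^\ast$ generates a finite-dimensional $U^+$-submodule. Thus $M^\ast\in\CO$. I expect this step, balancing the boundedness-of-weights assumption against $U^+$-locality, to be the only step requiring genuine argument; everything else is bookkeeping.

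For the double dual, the hypotheses on $M$ transfer to $M^\ast$ (same bound $\gamma$, same weight-space dimensions), so $M^{\ast\ast}$ is again an object of $\CO$ and the evaluation map $M_\lambda \to \Hom_k(\Hom_k(M_\lambda,k),k)$ is an isomorphism in each weight because $\dim_k M_\lambda<\infty$. Assembling these gives a canonical $k$-linear isomorphism $M \xrightarrow{\sim} (M^\ast)^\ast$. To see it is $U$-linear, one computes for $x\in U$, $m\in M$, $\phi\in M^\ast$:
$$
(x.\mathrm{ev}_m)(\phi)=\mathrm{ev}_m(\sigma(x).\phi)=(\sigma(x).\phi)(m)=\phi(\sigma^2(x).m)=\phi(x.m)=\mathrm{ev}_{x.m}(\phi),
$$
using $\sigma^2=\id$ (which is immediate from $\sigma(e_\alpha)=f_\alpha$, $\sigma(f_\alpha)=e_\alpha$, $\sigma(h_\alpha)=h_\alpha$ on generators). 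Finally, functoriality in $M$ and naturality of the double-dual isomorphism follow from the corresponding statements for the restricted $k$-linear dual on each weight space, together with the fact that $\sigma$ is the same antiautomorphism for all modules.
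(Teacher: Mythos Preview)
Your proof is correct and follows essentially the same approach as the paper's. The paper's proof is extremely terse (three sentences): it notes that $M^\ast$ is a weight module with the same bounded-above weight set as $M$, asserts $U^+$-local finiteness from this, and declares $(M^\ast)^\ast=M$ ``immediate''; you spell out exactly these steps, supplying the details (finiteness of the interval $[\lambda,\gamma]$, the computation with $\sigma^2=\id$) that the paper leaves implicit.
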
 
\begin{proof} As observed above, $M^\ast$ is a weight module.  As the set of weights of $M^\ast$ coincides with the set of weights of $M$, it is bounded from above, hence $M^\ast$ must be $U^+$-locally finite. So it is contained in $\CO$ as well. It again satisfies the assumptions of the above lemma, and it is immediate that $(M^{\ast})^\ast=M$ canonically. 
\end{proof}
For the objects in the statement 
Clearly, $L^\ast$ is a simple object in $\CO$ if $L$ is. From the fact that  the characters agree we deduce that $L(\lambda)^\ast\cong L(\lambda)$ for all $\lambda\in X$. 

We denote by $\nabla(\lambda)=\Delta(\lambda)^\ast$ the dual of the Verma module with highest weight $\lambda$.
\begin{lemma} \label{lemma-ExtVan} We have $\dim_k\Hom_{\CO}(\Delta(\lambda),\nabla(\mu))=\delta_{\lambda,\mu}$ and $\Ext_\CO^1(\Delta(\lambda),\nabla(\mu))=0$ for all $\lambda,\mu\in X$.
\end{lemma}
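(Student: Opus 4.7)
The plan is to combine the universal property of Verma modules (Proposition \ref{prop-basicVerma}(2)) with the duality $M \mapsto M^\ast$, reducing the Ext vanishing to an elementary weight-space calculation via a symmetry argument.

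For the Hom statement, Proposition \ref{prop-basicVerma}(2) identifies $\Hom_\CO(\Delta(\lambda),\nabla(\mu))$ with the subspace of $U^+_{>0}$-annihilated vectors in $\nabla(\mu)_\lambda$. Since $\nabla(\mu)$ has the same character as $\Delta(\mu)$, its weights lie in $\{\nu\le\mu\}$ and $\dim\nabla(\mu)_\mu=1$. For $\lambda=\mu$, any vector of weight $\mu$ is $U^+_{>0}$-invariant because $\mu+n\alpha$ is not a weight of $\nabla(\mu)$ when $n>0$, so the Hom space is one-dimensional. For $\lambda\not\le\mu$ the weight space $\nabla(\mu)_\lambda$ itself vanishes. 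For the remaining case $\lambda<\mu$, I would use that $\Soc\nabla(\mu)=L(\mu)$, obtained by dualizing $\head\Delta(\mu)=L(\mu)$: a non-zero $U^+_{>0}$-invariant $v\in\nabla(\mu)_\lambda$ would generate a submodule equal to $U^-.v$ by the triangular decomposition of Theorem \ref{thm-PBW}, hence supported only in weights $\le\lambda$, and could therefore not contain the simple socle sitting in weight $\mu>\lambda$.

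For the Ext vanishing, the decisive observation is that the duality $M\mapsto M^\ast$ is an exact contravariant involution on the full subcategory of $\CO$ consisting of modules with finite-dimensional weight spaces and bounded-above support; this subcategory is stable under extensions and contains $\Delta(\lambda)$, $\nabla(\mu)$, and any middle term $E$ of an extension between them. Since $\Delta(\nu)^\ast=\nabla(\nu)$ for all $\nu$, the duality induces an isomorphism
$$
\Ext^1_\CO(\Delta(\lambda),\nabla(\mu))\cong\Ext^1_\CO(\Delta(\mu),\nabla(\lambda)).
$$
Because $\lambda<\mu$ and $\mu<\lambda$ cannot both hold, after possibly applying this symmetry one may assume $\lambda\not<\mu$.

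Under the assumption $\lambda\not<\mu$, I would take any short exact sequence $0\to\nabla(\mu)\to E\to\Delta(\lambda)\to 0$ in $\CO$ and pick a lift $e\in E_\lambda$ of the canonical generator $v_\lambda$. For each $\alpha\in R^+$ and $n>0$, the vector $e_\alpha^{(n)}.e$ projects to zero in $\Delta(\lambda)$ and therefore lies in $\nabla(\mu)_{\lambda+n\alpha}$. If $\lambda=\mu$ then $\lambda+n\alpha>\mu$, and if $\lambda\not\le\mu$ then $\lambda+n\alpha\not\le\mu$ as well (otherwise $\lambda\le\mu-n\alpha\le\mu$, a contradiction); in either subcase the relevant weight space of $\nabla(\mu)$ is zero. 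Hence $e$ is automatically $U^+_{>0}$-invariant, and Proposition \ref{prop-basicVerma}(2) produces a splitting $\Delta(\lambda)\to E$ sending $v_\lambda$ to $e$. The only point requiring care is to verify that the duality does interchange $\Delta$ and $\nabla$ and is exact on a subcategory closed under extensions, so that it genuinely realises the claimed isomorphism on $\Ext^1_\CO$; once this is in place, the argument reduces to the weight-space bookkeeping above.
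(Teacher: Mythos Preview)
Your proof is correct and follows essentially the same strategy as the paper: the universal property of $\Delta(\lambda)$, the duality $(\cdot)^\ast$ interchanging $\Delta$ and $\nabla$, and the impossibility of having both $\lambda<\mu$ and $\mu<\lambda$. The only minor variation is in the $\Hom$ case with $\lambda<\mu$: you invoke $\Soc\nabla(\mu)=L(\mu)$, whereas the paper simply dualizes the morphism (a nonzero $\Delta(\lambda)\to\nabla(\mu)$ yields a nonzero $\Delta(\mu)\to\nabla(\lambda)$, forcing $\mu\le\lambda$), and for the $\Ext$ part the paper dualizes the short exact sequence directly rather than phrasing it as an isomorphism $\Ext^1_\CO(\Delta(\lambda),\nabla(\mu))\cong\Ext^1_\CO(\Delta(\mu),\nabla(\lambda))$.
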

(Here, $\Ext_\CO^1$ means  the Yoneda-extension group.)
\begin{proof} For all $\lambda,\mu\in X$ we have 
$$
\Hom_{\CO}(\Delta(\lambda),\nabla(\mu))\cong\{v\in  \nabla(\mu)_\lambda\mid e_{\alpha}^{(n)}v=0\text{ for all $\alpha\in R^+$, $n>0$}\}
$$
by the universal property of Verma modules. This is a one-dimensional space if $\lambda=\mu$. If $\lambda\not\le\mu$, then $\nabla(\mu)_\lambda=0$ and hence there are no non-trivial homomorphisms from $\Delta(\lambda)$ to $\nabla(\mu)$. From any non-zero homomorphism $f\colon \Delta(\lambda)\to\nabla(\mu)$ we obtain by dualizing a non-zero homomorphism $f^\ast\colon \Delta(\mu)\to\nabla(\lambda)$. Hence the existence of a non-zero homomorphism implies $\lambda\le\mu$ and $\mu\le\lambda$, hence $\lambda=\mu$. We have proven the first statement. 

Let $0\to \nabla(\mu)\to M\to\Delta(\lambda)\to 0$ be an exact sequence. If $\lambda\not <\mu$, then $\lambda$ is a maximal weight of $M$, so the universal property of $\Delta(\lambda)$ implies that this sequence splits. By dualizing we obtain another exact sequence $0\to\nabla(\lambda)\to M^\ast\to\Delta(\mu)\to 0$, which splits if $\mu\not <\lambda$. But  the conditions $\lambda<\mu$ and $\mu<\lambda$ cannot both hold. Hence the second statement is also true.
\end{proof}
\subsection{The Frobenius twist}
The group scheme $G_k$ is induced from an integral version $G_\DZ$ via base change. Hence there exists a Frobenius endomorphism $G_k\to G_k$ (an endomorphism of $k$-group schemes). This endomorphism induces an endomorphism on its $k$-algebra of distributions. We hence obtain an endomorphism    $\Fr\colon U\to U$ that is given by the following formulas: 
\begin{align*}
e_\alpha^{(n)}\mapsto
\begin{cases}
e_\alpha^{(n/p)},&\text{ if $p$ divides $n$},\\
0,&\text{ if $p$ does not divide $n$},
\end{cases}\\
f_\alpha^{(n)}\mapsto
\begin{cases}
f_\alpha^{(n/p)},&\text{ if $p$ divides $n$},\\
0,&\text{ if $p$ does not divide $n$},
\end{cases}\\
{h_\alpha\choose n}\mapsto
\begin{cases}
{h_\alpha\choose n/p},&\text{ if $p$ divides $n$},\\
0,&\text{ if $p$ does not divide $n$}.
\end{cases}
\end{align*}

If $M$ is a $U$-module, then we denote by $M^{[1]}$ the $U$-module that we obtain from $M$ by pulling back the action homomorphism along $\Fr$. If $M$ is a weight module, then so is $M^{[1]}$ and we have $M^{[1]}_\lambda=0$ unless $\lambda\in pX$, in which case $M^{[1]}_\lambda=M_{\lambda/p}$. If $M$ is $U^+$-locally finite, then so is $M^{[1]}$ and we obtain a functor $(\cdot)^{[1]}\colon \CO\to\CO$. This is called the {\em Frobenius twist}. We denote by $(\cdot)^{[l]}$ the functor obtained by applying $(\cdot)^{[1]}$ $l$-times. As $\Fr$ is surjective, $L^{[1]}$ is simple for all simple objects $L$ in $\CO$. A quick check on the highest weights yields the following result.
\begin{lemma}\label{lemma-FrobTwistSimples} 
We have $L(\lambda)^{[1]}=L(p\lambda)$ for all $\lambda\in X$. 
\end{lemma}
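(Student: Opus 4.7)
The plan is to use the universal mapping property from Proposition \ref{prop-basicVerma}(2) to construct a nonzero homomorphism $\Delta(p\lambda)\to L(\lambda)^{[1]}$, and then identify the target via Proposition \ref{prop-basicVerma}(3) using the simplicity of $L(\lambda)^{[1]}$ that is asserted in the paragraph preceding the lemma.

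First, I would fix a nonzero highest weight vector $v\in L(\lambda)_\lambda$ and verify that it sits in the weight space $(L(\lambda)^{[1]})_{p\lambda}$. The twisted action of $U^0$ on $v$ is given by the character $\chi_\lambda\circ\Fr$, so the task is to check the identity $\chi_\lambda\circ\Fr=\chi_{p\lambda}$ on each generator ${h_\beta\choose n}$. Unwinding the definitions, this amounts to the congruence ${p\langle\lambda,\beta^\vee\rangle\choose n}\equiv{\langle\lambda,\beta^\vee\rangle\choose n/p}\pmod p$ when $p\mid n$, and $\equiv 0\pmod p$ otherwise, which is a standard consequence of Lucas' theorem on binomial coefficients mod $p$.

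Second, I would show that $v$ is annihilated by every $e_\alpha^{(n)}$ with $n>0$ acting in the twisted way. This action is by $\Fr(e_\alpha^{(n)})$, which is $e_\alpha^{(n/p)}$ with $n/p\geq 1$ if $p\mid n$, and is $0$ if $p\nmid n$; in both cases it kills the highest weight vector of $L(\lambda)$. By Proposition \ref{prop-basicVerma}(2), this produces a nonzero homomorphism $\Delta(p\lambda)\to L(\lambda)^{[1]}$.

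Finally, since $L(\lambda)^{[1]}$ is simple, this homomorphism must be surjective, realizing $L(\lambda)^{[1]}$ as a simple quotient of $\Delta(p\lambda)$. By Proposition \ref{prop-basicVerma}(3) this forces $L(\lambda)^{[1]}\cong L(p\lambda)$. The only step requiring any real care is the binomial identity underlying the weight computation; the rest is a direct application of the universal property of Verma modules.
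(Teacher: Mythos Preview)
Your proof is correct and is precisely the ``quick check on the highest weights'' that the paper alludes to; in fact the weight space identity you verify via Lucas' theorem is the content of the claim $M^{[1]}_\lambda=M_{\lambda/p}$ stated (without proof) in the paragraph preceding the lemma, so within the paper's logic you could simply cite that and move directly to the surjection $\Delta(p\lambda)\to L(\lambda)^{[1]}$.
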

Hence  $L(p\lambda)_\mu=0$ unless $\mu\in pX$. 
\subsection{Direct decompositions}
The goal of this section is to show that we have a Krull-Remak-Schmidt decomposition for all finitely generated objects in $\CO$. Once the analogue of the Fitting lemma is proven, the arguments for this result are standard. 

\begin{lemma}\label{lemma-Fitting} Suppose that $M$ is an object in $\CO$ that is finitely generated as a $U$-module. Then any endomorphism $f$ of $M$ induces a Fitting-decomposition, i.e. there exists some $n>0$ such that $M=\im f^n\oplus \ker f^n$.
\end{lemma}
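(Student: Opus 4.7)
The plan is to reduce to the classical Fitting lemma on each weight space of $M$ and then to glue the decompositions back into $U$-submodules. First I would show that any finitely generated object $M$ in $\CO$ has finite-dimensional weight spaces: a finite set of weight-vector generators of $M$ lies in a finite-dimensional $U^0$- and $U^+$-stable subspace $V \subset M$, so $M = U^-\cdot V$, and the PBW basis of $U^-$ from Theorem \ref{thm-PBW}(2) forces $\dim_k M_\mu < \infty$ for every $\mu \in X$, since only finitely many ordered monomials in the $f_\alpha^{(n)}$ applied to the weight vectors of $V$ can yield weight $\mu$. Since $f$ is $U$-linear it commutes with the $U^0$-action and hence preserves every weight space.

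For each $\mu \in X$, the classical Fitting lemma applied to the endomorphism $f|_{M_\mu}$ of the finite-dimensional space $M_\mu$ produces a canonical direct sum decomposition $M_\mu = M_\mu^{(0)} \oplus M_\mu^{(1)}$ with $M_\mu^{(0)} = \bigcup_{n\ge 1}\ker(f|_{M_\mu})^n$ and $M_\mu^{(1)} = \bigcap_{n\ge 1}\im(f|_{M_\mu})^n$. Setting $M^{(i)} := \bigoplus_{\mu \in X} M_\mu^{(i)}$ for $i=0,1$, I would check that both pieces are $U$-submodules of $M$. For $M^{(0)}$: any $v \in M^{(0)}$ has finite weight support, so taking the maximum over its components produces an integer $n$ with $f^n v = 0$, whence $f^n(uv) = u f^n v = 0$ for any $u\in U$. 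For $M^{(1)}$: if $v \in M_\mu^{(1)}$ is written as $v = f^n(w_n)$ for each $n\ge 1$ and $u \in U$, then $uv = f^n(u w_n)$ by $U$-linearity, and since $f$ preserves weight spaces the weight component $(uv)_\nu$ equals $f^n((u w_n)_\nu) \in f^n(M_\nu)$, so $uv \in M^{(1)}$.

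Finally, to upgrade pointwise nilpotence on $M^{(0)}$ to a single uniform degree, pick weight-vector generators $v_1, \ldots, v_k$ of $M$ and decompose $v_i = v_i^{(0)} + v_i^{(1)}$ along $M = M^{(0)} \oplus M^{(1)}$. Let $N$ be the $U$-submodule of $M^{(0)}$ generated by the $v_i^{(0)}$. Since $v_i \equiv v_i^{(1)} \pmod N$ and the $v_i$ generate $M$, the images of the $v_i^{(1)}$ generate $M/N$, so $M = N + M^{(1)}$; combined with $N \subset M^{(0)}$ and $M^{(0)} \cap M^{(1)} = 0$ this forces $N = M^{(0)}$. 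Choose $n_i$ with $f^{n_i} v_i^{(0)} = 0$ and let $n := \max_i n_i$. Then $f^n$ annihilates every generator of $M^{(0)}$, so $U$-linearity gives $f^n|_{M^{(0)}} = 0$; on each weight space $f|_{M_\mu^{(1)}}$ is bijective by construction, so $f^n|_{M^{(1)}}$ is bijective as well. Hence $\ker f^n = M^{(0)}$, $\im f^n = M^{(1)}$, and $M = \im f^n \oplus \ker f^n$. The main obstacle is precisely this last upgrade: the finite generation of $M$ is used essentially to ensure that $M^{(0)}$ is finitely generated and thus that a single $n$ controls $f$-nilpotency globally.
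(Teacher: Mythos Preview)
Your argument is correct. The route, however, differs from the paper's in the order of operations. You first apply the classical Fitting lemma weight-space by weight-space (with a priori varying exponents), assemble the eventual kernel and eventual image into subspaces $M^{(0)}$ and $M^{(1)}$, verify by hand that both are $U$-stable, and only then use finite generation to pin down a single $n$. The paper instead fixes a finite set $B$ of weights such that $M_B=\bigoplus_{\lambda\in B}M_\lambda$ is finite-dimensional and generates $M$; Fitting on $M_B$ produces an $n$ with $\im f_B^n=\im f_B^{n+1}$, and since $\im f_B^l$ generates $\im f^l$ one gets $\im f^n=\im f^{n+1}$ globally. Finite-dimensionality of each weight space then forces $\ker f_\mu^n=\ker f_\mu^{n+1}$ as well, so the ordinary Fitting decomposition $M_\mu=\ker f_\mu^n\oplus\im f_\mu^n$ holds with the \emph{same} $n$ for all $\mu$. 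The advantage of the paper's order is that the $U$-stability of the two pieces is automatic---they are simply $\ker f^n$ and $\im f^n$ for a $U$-linear map---so the separate verification you carry out in Step~4 is not needed. Your approach, on the other hand, makes more transparent that the decomposition is intrinsic (independent of $n$) and that finite generation enters only to produce the uniform exponent.
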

\begin{proof} If $M$ is finitely generated, then every weight space of $M$ is finite dimensional and there exists a finite subset $B$ of $X$ such that $M_B:=\bigoplus_{\lambda\in X} M_\lambda$ is finite dimensional and generates $M$. We denote by $f_B$  the vector space endomorphism on $M_B$ that is induced by $f$. Then there is some $n\gg 0$ such that $\im f^n_B=\im f^{n+1}_B=\im f^{n+2}_B=\dots$. As $M_B$ generates $M$, $\im f_B^l$ generates $\im f^l\subset M$ for all $l>0$ and  we deduce that the image of $f$ stabilizes:  $\im f^n=\im f^{n+1}=\im f^{n+2}=\dots$. Now let $\mu\in X$ and denote by $f_\mu$ the endomorphism on $M_\mu$ induced by $f$. Then  $\im f^n_\mu=\im f^{n+1}_\mu=\dots$. As $M_\mu$ is finite dimensional we also deduce $\ker f^n_\mu=\ker f^{n+1}_\mu=\dots$. We also obtain the Fitting-decomposition $M_\mu=\ker f_\mu^n\oplus\im f_\mu^n$ on each weight space. Hence  $M=\im f^n\oplus \ker f^n$.
 \end{proof}
 
 \begin{lemma}\label{lemma-endlocal} Suppose that $M$ is a finitely generated, indecomposable object in $\CO$. Then every endomorphism of $M$ is either nilpotent or an automorphisms. In particular, $\End_\CO(M)$ is a local ring.
 \end{lemma}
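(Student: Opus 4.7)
The plan is to deduce both assertions straight from the Fitting decomposition established in Lemma \ref{lemma-Fitting}. I would start with an arbitrary $f\in\End_\CO(M)$ and apply that lemma to obtain $n>0$ with $M=\im f^n\oplus\ker f^n$. Since $M$ is indecomposable, exactly one of these summands is zero. If $\im f^n=0$, then $f^n=0$ and $f$ is nilpotent. If instead $\ker f^n=0$, then $f^n\colon M\to M$ is injective with image $M$, hence an automorphism; from the factorizations $f^n=f\cdot f^{n-1}=f^{n-1}\cdot f$ one reads off that $f$ is injective and surjective, so $f$ itself is an automorphism.

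For the second assertion I would use the standard criterion that a unital ring $R$ is local if and only if for every $x\in R$ at least one of $x$ and $1-x$ is a unit. Given $f\in\End_\CO(M)$, the first part supplies the dichotomy: either $f$ is an automorphism, in which case we are done, or $f$ is nilpotent, say with $f^n=0$. In the latter case $\id_M-f$ admits the two-sided inverse $\id_M+f+f^2+\dots+f^{n-1}$, a finite sum in $\End_\CO(M)$, so $\id_M-f$ is an automorphism. This verifies the criterion and shows that $\End_\CO(M)$ is local, with maximal ideal equal to the set of nilpotent endomorphisms.

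The only genuine content is already packaged in Lemma \ref{lemma-Fitting}; the potential obstacle lies there, in promoting the weight-space-wise stabilization of $\im f^n$ and $\ker f^n$ to a global Fitting decomposition using finite generation and the finite-dimensionality of individual weight spaces. Once that is granted, the passage to the local ring statement is purely formal, requires no input specific to positive characteristic, and involves no calculations beyond the geometric series inversion of $\id_M-f$ for nilpotent $f$.
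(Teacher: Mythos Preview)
Your proof is correct and follows essentially the same route as the paper: both deduce the nilpotent/automorphism dichotomy directly from the Fitting decomposition of Lemma~\ref{lemma-Fitting} together with indecomposability, and both rely on the geometric-series inversion of $\id_M-f$ for nilpotent $f$ (the paper uses it implicitly in the step ``$g-f$ is invertible'' for $g$ an automorphism and $f$ nilpotent). The only cosmetic difference is that you invoke the standard ``$x$ or $1-x$ is a unit'' criterion for locality, whereas the paper instead verifies directly that the nilpotent endomorphisms form a two-sided ideal; these are equivalent packagings of the same elementary ring-theoretic fact.
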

 \begin{proof} Let $f$ be an endomorphism of $M$ that is not an automorphism. The finite dimensionality of the weight spaces allows us to deduce that $f$ is not injective. Hence $\ker f\ne \{0\}$ and we deduce $M=\ker f^n$ for some $n\gg 0$ from the indecomposability of $M$ and Lemma \ref{lemma-Fitting}. Hence $f$ is nilpotent. So every endomorphism is either nilpotent or an automorphism. 
 
 If $f$ is nilpotent and $g$ is an automorphism, then $g-f$ is invertible, hence an automorphism. It follows that the sum of two nilpotent endomorphisms of $M$ is again nilpotent. Moreover, $g\circ f$ and $f\circ g$ are not injective, hence nilpotent. So the nilpotent endomorphisms form an ideal in $\End_\CO(M)$ and each element in its complement is invertible. Hence $\End_\CO(M)$ is a local ring.   \end{proof}
  
\begin{proposition}\label{prop-KRS} Let $M$ be a finitely generated object in $\CO$. Then $M$ can be written as a finite direct sum of indecomposable objects in $\CO$. Moreover, such a decomposition is unique up to reordering and isomorphisms.
\end{proposition}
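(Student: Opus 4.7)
The plan is to follow the classical Krull--Remak--Schmidt scheme (as in \cite[Theorem 1.11]{HBook}), substituting a direct argument for the characteristic-zero ``finite length'' step. The two ingredients already in place are the Fitting-type decomposition of Lemma \ref{lemma-Fitting} and the locality of $\End_\CO(N)$ for indecomposable $N$ (Lemma \ref{lemma-endlocal}), and with these in hand the bulk of the proof transfers verbatim.

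First I would record a preliminary finiteness lemma: every direct sum decomposition $M=\bigoplus_{i\in I}M_i$ into nonzero submodules has $|I|<\infty$. Fixing generators $m_1,\dots,m_k$ of $M$, each $m_j$ has only finitely many nonzero components, and any index $i_0$ at which all projections $\pi_{i_0}(m_j)$ vanish forces $\pi_{i_0}=0$ and hence $M_{i_0}=0$, so only finitely many $M_i$ can be nonzero.

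Existence of a decomposition into indecomposables then follows by contradiction: if $M$ admitted no such decomposition, one could iteratively split off nonzero direct summands $M=N_1\oplus M^{(1)}$, $M^{(1)}=N_2\oplus M^{(2)}$, \dots, with each $M^{(i)}$ still failing to admit a finite indecomposable decomposition. For every weight $\gamma$ the chain $(N_1\oplus\cdots\oplus N_i)_\gamma\subset M_\gamma$ stabilizes because $\dim_k M_\gamma<\infty$, forcing $(N_j)_\gamma=0$ for $j\gg 0$; working weight by weight one then obtains an honest decomposition $M=\bigoplus_{j\ge 1}N_j\oplus\bigcap_i M^{(i)}$ with infinitely many nonzero summands, contradicting the preliminary lemma. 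Uniqueness is the standard exchange argument: given $M=\bigoplus_{i=1}^r M_i=\bigoplus_{j=1}^s N_j$ with all summands indecomposable, $\id_{M_1}$ factors as a sum of compositions $M_1\to N_j\to M_1$, and locality of $\End_\CO(M_1)$ forces one of them to be an automorphism, yielding $M_1\cong N_j$ and allowing the exchange; iterating gives the bijection of indexing sets and the pairwise isomorphism.

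The main obstacle is the termination step in the existence proof. In characteristic zero, finite length of $M$ gives an immediate length-function bound on the number of summands; here one must instead construct the infinite direct-sum decomposition above, which requires the careful weight-by-weight verification that the contribution from $\bigcap_i M^{(i)}$ is correctly accounted for. Once this is done, the preliminary lemma delivers the required contradiction and the remainder of the argument is as in \cite{HBook}.
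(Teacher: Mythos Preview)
Your proof is correct and rests on the same two ingredients as the paper's: a bound on the number of direct summands coming from finite generation, and the locality of endomorphism rings (Lemma~\ref{lemma-endlocal}) for uniqueness. The difference lies only in how the existence step is organised. The paper fixes a finite set $B$ of weights such that $M_B=\bigoplus_{\lambda\in B}M_\lambda$ is finite-dimensional and generates $M$, observes that every nonzero direct summand of $M$ meets $M_B$ nontrivially, and concludes that any decomposition has at most $\dim_k M_B$ summands; repeated splitting then terminates in one line. Your preliminary lemma is exactly this observation (with the generators $m_1,\dots,m_k$ playing the role of $M_B$), and it already gives a \emph{uniform} bound on the number of nonzero summands. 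Your contradiction argument---iteratively splitting, assembling an infinite direct sum $M=\bigoplus_{j\ge1}N_j\oplus\bigcap_iM^{(i)}$ weight by weight, and then invoking the preliminary lemma---is valid but an unnecessary detour: once you know the number of summands is uniformly bounded, the splitting process visibly terminates without any need to pass to an infinite decomposition or to worry about~$\bigcap_iM^{(i)}$. The uniqueness arguments are identical.
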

\begin{proof} As in the proof of Lemma \ref{lemma-Fitting} we denote by $B$ a subset of  $X$ such that $M_B=\bigoplus_{\lambda\in B}M_\lambda \subset M$ is finite dimensional and generates $M$. Then every direct summand of $M$ must intersect $M_B$ non-trivially. In particular, the number of direct summands in a decomposition of $M$ is bounded by the dimension of $M_B$. So a decomposition of $M$ into indecomposable objects  exists.  The uniqueness of this decomposition can be proven (with Lemma \ref{lemma-endlocal}) using standard arguments.
\end{proof}
\section{Verma flags and projective objects in $\CO$}
 The modular category $\CO$ does not possess enough projectives. In fact, only the $L(\lambda)$ with antidominant $\lambda$ admit a projective cover. But locally, i.e. for certain {\em truncated subcategories} $\CO^J$, projective covers exist, as we will show in this section. This  will be good enough for all our purposes. Note that this situation resembles the (related) case of representations of Kac--Moody algebras. 
We will also prove an appropriate version of the BGGH-reciprocity theorem that was first proven in the context of  modular Lie algebras by Humphreys in \cite{H2}. Later, the characteristic $0$ case was treated in \cite{BGG}. Many of the ideas used in the following originate in the article \cite{RCW}. 
\subsection{ Verma flags}
Let $M$ be an object in $\CO$. 
\begin{definition} $M$ is said to {\em admit a Verma flag} if there is a (finite) filtration
$$
0=M_0\subset M_1\subset\dots\subset M_n=M
$$
and $\lambda_1,\dots,\lambda_n\in X$ such that 
 $M_{i}/M_{i-1}\cong\Delta(\lambda_i)$ for all $i=1,\dots,n$. 
\end{definition}
A filtration like the one in the definition above is sometimes, for example in \cite{HBook}, called a {\em standard filtration}.

In the situation of the definition above we set  
$$
(M:\Delta(\mu))=\#\{i\in\{1,\dots,n\}\mid \lambda_i=\mu\}.
$$
This number is independent of the chosen filtration and is called the {\em multiplicity of $\Delta(\mu)$ in $M$}.  
 

 \begin{lemma} \label{lemma-reorderVF} Suppose that $M$ admits a Verma flag. Then there exists a filtration
 $$
 0=M_0\subset M_1\subset\dots\subset M_n=M
 $$
 and $\lambda_{1},\dots,\lambda_n\in X$ such that $M_{i}/M_{i-1}\cong\Delta(\lambda_i)$ for $i=1,\dots,n$, and such that $\lambda_i>\lambda_j$ implies $i<j$.
 \end{lemma}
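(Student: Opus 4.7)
I would prove this by induction on the length $n$ of the filtration, with the trivial base case $n\le 1$. The inductive step rests on a local swap lemma, which I then apply repeatedly to bubble a maximal weight to the front of the filtration before invoking the inductive hypothesis on the remainder.

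For the swap lemma, suppose that in some Verma flag of $M$ consecutive quotients are $\Delta(\lambda_k)$ and $\Delta(\lambda_{k+1})$ with $\lambda_{k+1}\not\le\lambda_k$. Set $E:=M_{k+1}/M_{k-1}$, which fits into
$$
0\to\Delta(\lambda_k)\to E\to\Delta(\lambda_{k+1})\to 0.
$$
Every weight of $E$ is $\le\lambda_k$ or $\le\lambda_{k+1}$; a weight $\nu>\lambda_{k+1}$ would have to satisfy $\nu\le\lambda_k$, forcing $\lambda_k\ge\nu>\lambda_{k+1}$ and hence $\lambda_{k+1}\le\lambda_k$, contrary to hypothesis. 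Thus $\lambda_{k+1}$ is a maximal weight of $E$, and $\lambda_{k+1}$ is not a weight of $\Delta(\lambda_k)$ for the same reason, so $E_{\lambda_{k+1}}$ is one-dimensional. Any nonzero $v\in E_{\lambda_{k+1}}$ is killed by every $e_\alpha^{(n)}$ with $\alpha\in R^+$ and $n>0$ by maximality, so Proposition \ref{prop-basicVerma}(2) provides a homomorphism $\Delta(\lambda_{k+1})\to E$ sending $v_{\lambda_{k+1}}\mapsto v$; this splits the projection. Letting $M_k'\subset M_{k+1}$ be the preimage of the splitting's image under $M_{k+1}\to E$, we have $M_k'/M_{k-1}\cong\Delta(\lambda_{k+1})$ and $M_{k+1}/M_k'\cong\Delta(\lambda_k)$, so replacing $M_k$ by $M_k'$ produces a new Verma flag in which the steps with weights $\lambda_k$ and $\lambda_{k+1}$ are transposed and all other steps are unchanged.

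For the inductive step, pick $\mu$ maximal in the multiset $\{\lambda_1,\dots,\lambda_n\}$ with respect to $\le$, and let $j$ be the smallest index with $\lambda_j=\mu$. For each $k<j$ we have $\lambda_k\ne\mu$, and by maximality of $\mu$ the relation $\mu\le\lambda_k$ would force $\mu=\lambda_k$; hence $\mu\not\le\lambda_k$. I would then apply the swap lemma to the pairs $(j-1,j),(j-2,j-1),\dots,(1,2)$ in turn, successively moving the step with quotient $\Delta(\mu)$ leftward to position $1$. Applying the induction hypothesis to $M$ modulo this first step yields a refiltration in which the remaining weights $\lambda_1,\dots,\widehat{\lambda_j},\dots,\lambda_n$ appear in the prescribed order, and maximality of $\mu$ guarantees that no later weight strictly exceeds $\mu$, so prepending the $\Delta(\mu)$ step preserves the desired condition.

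The only real obstacle is the swap lemma, and it is not much of one: once one notices that $\lambda_{k+1}$ is a maximal weight of $E$, the splitting falls immediately out of the universal property of Verma modules. The rest of the argument is bookkeeping.
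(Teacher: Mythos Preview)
Your proof is correct and follows essentially the same approach as the paper: both rest on the fact (from Proposition~\ref{prop-basicVerma}) that a surjection $N\to\Delta(\lambda)$ splits whenever $\lambda$ is maximal among the weights of $N$, and both proceed by induction on the length of the flag. You have simply unpacked the paper's one-sentence hint into an explicit adjacent-swap lemma plus a bubble-sort, whereas one could equally well split $M_j\twoheadrightarrow\Delta(\mu)$ once and move $\Delta(\mu)$ to the bottom in a single stroke; the content is the same.
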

 \begin{proof} This follows, by induction, from the fact that a surjective homomorphism $N\to\Delta(\lambda)$ splits if $\lambda$ is maximal among the weights of $N$ (by Proposition  \ref{prop-basicVerma}).  \end{proof}

\begin{lemma} \label{lemma-dirsumVF} Let $M=A\oplus B$ be a decomposition in $\CO$. Then $M$ admits a Verma flag if and only if $A$ and $B$ admit Verma flags. In this case,
$$
(M:\Delta(\mu))=(A:\Delta(\mu))+(B:\Delta(\mu)).
$$
\end{lemma}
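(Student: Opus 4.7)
The easy direction is immediate: given Verma flags of $A$ and $B$, I would concatenate them (first the flag of $A$, then lift the flag of $B$ through the projection $M\to B$) to obtain a Verma flag of $M$. The additivity of multiplicities then follows once $(M:\Delta(\mu))$ is known to depend only on $\ch M$, which is a consequence of the linear independence of the characters $\ch\Delta(\nu)$ in $\widehat{\DZ[X]}$.

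For the converse I would argue by induction on the length $n$ of a Verma flag of $M$, the case $n=0$ being trivial. Fix a weight $\lambda$ maximal in the support of $M$. Any Verma factor $\Delta(\lambda_i)$ contributes to $M_\lambda$ exactly when $\lambda_i=\lambda$ (since $\lambda_i\le\lambda$ by maximality of $\lambda$, and $\Delta(\lambda_i)_\lambda\ne 0$ would force $\lambda\le\lambda_i$), so $\dim_k M_\lambda = (M:\Delta(\lambda)) =: r$. Using Lemma \ref{lemma-reorderVF} I would reorder the flag so that the $r$ copies of $\Delta(\lambda)$ come first, producing a submodule $F\subset M$ built as $r$ successive extensions of $\Delta(\lambda)$ by $\Delta(\lambda)$. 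At each stage $\lambda$ remains maximal, so a preimage of the highest weight generator of the quotient $\Delta(\lambda)$ is automatically annihilated by $U^+$, and Proposition \ref{prop-basicVerma} produces a section; iterating yields $F\cong\Delta(\lambda)^r$. Since $\dim_k F_\lambda = r = \dim_k M_\lambda$ and $\Delta(\lambda)^r$ is generated by its $\lambda$-weight space, one deduces $F = U\cdot M_\lambda$.

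Now I would exploit the decomposition $M=A\oplus B$. The weight space splits as $M_\lambda=A_\lambda\oplus B_\lambda$, and setting $F_A:=U\cdot A_\lambda\subset A$, $F_B:=U\cdot B_\lambda\subset B$ one has $F = F_A\oplus F_B$ with $F_A\cong\Delta(\lambda)^{r_A}$ and $F_B\cong\Delta(\lambda)^{r_B}$, where $r_A=\dim_k A_\lambda$, $r_B=\dim_k B_\lambda$. The quotient $M/F = A/F_A \oplus B/F_B$ inherits a Verma flag of length $n-r$, so the inductive hypothesis supplies Verma flags on $A/F_A$ and $B/F_B$. Lifting these through the projections $A\to A/F_A$ and $B\to B/F_B$ and prepending the obvious flags of $F_A$ and $F_B$ by copies of $\Delta(\lambda)$ produces the required Verma flags on $A$ and $B$. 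The multiplicity identity then drops out: for $\mu=\lambda$ one has $(A:\Delta(\lambda))+(B:\Delta(\lambda)) = r_A+r_B = r = (M:\Delta(\lambda))$, while for $\mu\ne\lambda$ the identity follows from the induction hypothesis applied to $M/F = A/F_A\oplus B/F_B$.

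The main obstacle is the identification $F\cong\Delta(\lambda)^r = U\cdot M_\lambda$: one must simultaneously verify that the iterated self-extensions of $\Delta(\lambda)$ at the top of the reordered flag split and that the submodule generated by $M_\lambda$ coincides with, rather than being properly contained in, this split piece. Once that structural fact is in hand, the remainder of the argument is routine filtration bookkeeping.
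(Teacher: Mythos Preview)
Your argument is correct and is precisely the inductive strategy of Humphreys' Section~3.7, to which the paper simply defers: choose a maximal weight $\lambda$, split off the $\Delta(\lambda)$-part compatibly with $A\oplus B$, and induct on the shorter flag of the quotient; your only twist is to bundle all $r$ copies of $\Delta(\lambda)$ at once as $U\cdot M_\lambda$ rather than peeling one off at a time, which is a harmless reorganisation. One small imprecision: Lemma~\ref{lemma-reorderVF} as stated does not literally force the $\Delta(\lambda)$'s to come first when other incomparable maximal weights are present---you really need the slightly stronger fact (implicit in its proof) that adjacent subquotients can be swapped whenever the upper highest weight is not strictly below the lower one, so that all $\Delta(\lambda)$'s can be pushed to the bottom.
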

\begin{proof} If $A$ and $B$ admit Verma flags, then it is easily shown that $M$ admits a Verma flag and that the claim about the multiplicities holds. It is hence enough to show that if $M$ admits a Verma flag, then $A$ and $B$ do as well. This is proven using the same arguments as in the characteristic $0$ case that can be found, for example, in Section 3.7 in \cite{HBook}.
\end{proof}

\subsection{Some functors on $\CO$}
  It is now useful to endow the set $X$ with a topology. 
 \begin{definition}
 \begin{enumerate}
 \item  We say that a subset $ J $ of $X$ is {\em open} if $\lambda\in  J $ and $\mu\le\lambda$ imply $\mu\in J $.
 \item We say that a subset $ I $ of $X$ is {\em closed} if $\lambda\in  I $ and $\lambda\le\mu$  imply $\mu\in I $.
 \item We say that a subset $K$ of $X$ is {\em locally closed} if $\lambda,\nu \in K$ and $\lambda\le\mu\le\nu$ imply $\mu\in K$.
\end{enumerate}
\end{definition}
This indeed defines a topology. Note that arbitrary unions of closed subsets are closed again. 

Let $M$ be an object in $\CO$, and let $ I \subset X$ be a closed subset with open complement $ J $. We set
\begin{align*}
M_I  &:=\left \lgl \bigoplus_{\lambda\in  I } M_\lambda\right\rgl_U\subset M,\\
M^J &:=M/M_I  ,
\end{align*}
i.e.  $M_I  \subset M$ is the minimal submodule of $M$ that contains the weight spaces $M_\lambda$ with $\lambda\in  I $,  and $M^J $ the largest quotient of $M$ with  all weights contained in $ J $. Clearly, these definitions yield functors $(\cdot)_I  ,(\cdot)^J \colon\CO\to\CO$.

\begin{proposition}\label{prop-VermaSub}  Let $M$ be an object in $\CO$. Then the following are equivalent.
\begin{enumerate}
\item $M$ admits a Verma flag.
\item $M^J $ admits a Verma flag for any open set $ J $.
\item $M_I  $ admits a Verma flag for any closed set $ I $.
\end{enumerate}
If either statement above is true, then we have
\begin{align*}
(M^J :\Delta(\nu))&=
\begin{cases} 
(M:\Delta(\nu)),&\text{ if $\nu\in J $},\\
0,&\text{ otherwise,}
\end{cases}\\
(M_I  :\Delta(\nu))&=
\begin{cases} 
(M:\Delta(\nu)),&\text{ if $\nu\in I $},\\
0,&\text{ otherwise}
\end{cases}
\end{align*} 
for all open subset $ J $ and all closed subsets $ I $ of $X$.
\end{proposition}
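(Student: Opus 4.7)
My plan is to reduce everything to a local swap lemma for Verma flags. First, observe that (2) $\Rightarrow$ (1) and (3) $\Rightarrow$ (1) are immediate, since $X$ itself is both open and closed in the topology on $X$ and $M^X = M = M_X$. So only (1) $\Rightarrow$ (2) and (1) $\Rightarrow$ (3), together with the multiplicity formulas, require real work.

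The key ingredient I would establish first is the following swap lemma: if $A \subset B \subset C$ are submodules with $B/A \cong \Delta(\lambda)$, $C/B \cong \Delta(\mu)$ and $\mu \not< \lambda$, then there exists $B'$ with $A \subset B' \subset C$, $B'/A \cong \Delta(\mu)$ and $C/B' \cong \Delta(\lambda)$. The weights of $C/A$ all satisfy $\nu \le \lambda$ or $\nu \le \mu$, and the assumption $\mu \not< \lambda$ makes $\mu$ a maximal weight of $C/A$. Hence any vector in $(C/A)_\mu$ is annihilated by $U^+$ for weight reasons, so Proposition \ref{prop-basicVerma}(2) promotes a lift of the generator of $C/B \cong \Delta(\mu)$ to a morphism $\Delta(\mu) \to C/A$ which splits the short exact sequence $0 \to B/A \to C/A \to C/B \to 0$. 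Taking $B'$ to be the preimage in $C$ of the image of this splitting produces the desired filtration.

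Given a Verma flag $0 = M_0 \subset M_1 \subset \dots \subset M_n = M$ of $M$, I apply the swap lemma repeatedly to adjacent subquotients. Whenever $\lambda_j \in J$ and $\lambda_{j+1} \in I$ appear adjacently, upward-closedness of $I$ forces $\lambda_{j+1} \not< \lambda_j$, since otherwise $\lambda_{j+1} \le \lambda_j$ with $\lambda_{j+1} \in I$ would put $\lambda_j \in I$, contradicting $\lambda_j \in J$. So the swap lemma applies with $\lambda = \lambda_j$, $\mu = \lambda_{j+1}$. Each such swap strictly decreases the number of adjacent $(J,I)$-inversions, and after finitely many iterations I arrive at a Verma flag whose first $k$ subquotients are indexed by elements of $I$ and whose remaining $n-k$ subquotients are indexed by elements of $J$.

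It remains to identify the $k$-th step $M_k'$ of the resulting flag with $M_I$. On one hand, $M_k'$ is generated via its Verma flag by vectors of weight $\lambda_i \in I$, so $M_k' \subset M_I$. On the other hand, downward-closedness of $J$ puts all weights of the remaining Verma subquotients in $J$, so $M_\lambda \subset M_k'$ for every $\lambda \in I$ and therefore $M_I \subset M_k'$. This gives $M_I = M_k'$ and $M^J = M/M_I$ each a Verma flag, and the multiplicity formulas follow by counting how many $\lambda_i$ in the original flag lie in $I$ versus $J$. The main obstacle I anticipate is the swap lemma, and in particular the observation that $\mu$ is genuinely maximal in the weights of $C/A$; once that is in place, the rest is essentially bookkeeping.
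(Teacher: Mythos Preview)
Your proof is correct and follows essentially the same approach as the paper. The paper invokes Lemma~\ref{lemma-reorderVF} (which reorders a Verma flag so that larger highest weights come first) and then reads off the submodule $M'$ containing exactly the $I$-indexed subquotients; your swap lemma is precisely the inductive step behind Lemma~\ref{lemma-reorderVF}, and your identification $M_k' = M_I$ matches the paper's identification $M' = M_I$ verbatim. The only difference is packaging: the paper factors the reordering through a separately stated lemma, while you perform the adjacent swaps directly and only as far as needed to separate $I$ from $J$.
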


\begin{proof} As $M=M_X=M^X$, either (2) or (3)  imply (1). 
Suppose $M$ admits a Verma flag. Let $ J $ be an open subset of $X$ with closed complement $ I $. Using Lemma \ref{lemma-reorderVF} we deduce that there is a submodule $M^\prime$ of $M$, appearing in a Verma flag of $M$, such that $M^\prime$ and $M/M^\prime$ admit Verma flags and such that 
$$
(M^\prime:\Delta(\lambda))=\begin{cases}
(M:\Delta(\lambda)),&\text{if $\lambda\in  I $}, \\
 0,& \text{if $\lambda\in J $},
 \end{cases}
 $$
 and
 $$
 (M/M^\prime:\Delta(\lambda))=\begin{cases}
(M:\Delta(\lambda)),&\text{if $\lambda\in  J $}, \\
 0,& \text{if $\lambda\in I $}.
 \end{cases}
 $$
 In particular, it follows that $M^\prime=M_I  $ and $M/M^\prime=M^J $. This shows that (1) implies (2) and (3), and that the statement about the multiplicities is true if (1) holds.  \end{proof}
\subsection{Objects admitting a Verma flag as $U^{\le 0}$-modules} 
Set $U^{\le 0}:=U^-U^0$. Again this is a subalgebra of $U$.  We denote by $U^{\le 0}\catmod^{wt}$ the full subcategory of the category of $U^{\le 0}$-modules that contains all objects that are weight modules, and we denote by $\Res$ the restriction functor from the category $\CO$ to $U^{\le 0}\catmod^{wt}$.

\begin{lemma} \label{lemma-Deltaproj} Let $\lambda\in X$. Then $\Res\Delta(\lambda)$ is a projective object in $U^{\le 0}\catmod^{wt}$.
\end{lemma}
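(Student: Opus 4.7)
The plan is to exploit the triangular decomposition and the tensor-Hom adjunction to reduce projectivity of $\Res\Delta(\lambda)$ to exactness of the $\lambda$-weight space functor, which is trivial on weight modules.

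First I would use Theorem \ref{thm-PBW}(4) to identify $\Res\Delta(\lambda)$ explicitly. The PBW isomorphism $U^-\otimes_k U^0\otimes_k U^+\xrightarrow{\sim} U$ shows that as a right $U^{\ge 0}$-module, $U$ is free with basis given by the PBW monomials in $U^-$; after base change $U^+$ acts on $k_\lambda$ through the augmentation and $U^0$ through $\chi_\lambda$, so
\[
\Res\Delta(\lambda)=U\otimes_{U^{\ge 0}}k_\lambda\cong U^{\le 0}\otimes_{U^0}k_\lambda
\]
as $U^{\le 0}$-modules, where $k_\lambda$ on the right-hand side is viewed as a $U^0$-module via $\chi_\lambda$.

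Second, I would apply the tensor-Hom adjunction. For any $M$ in $U^{\le 0}\catmod^{wt}$,
\[
\Hom_{U^{\le 0}}(\Res\Delta(\lambda),M)=\Hom_{U^{\le 0}}(U^{\le 0}\otimes_{U^0}k_\lambda,M)\cong\Hom_{U^0}(k_\lambda,\Res^{U^{\le 0}}_{U^0}M)=M_\lambda,
\]
the last identification using the definition of $\chi_\lambda$ and of the weight space $M_\lambda$.

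Finally I would observe that the functor $M\mapsto M_\lambda$ is exact on $U^{\le 0}\catmod^{wt}$: every object in this category decomposes canonically as $M=\bigoplus_{\mu\in X}M_\mu$, so taking the $\lambda$-component is just projection onto a direct summand. Composing this exact functor with the isomorphism above yields that $\Hom_{U^{\le 0}}(\Res\Delta(\lambda),-)$ is exact, i.e.\ $\Res\Delta(\lambda)$ is projective.

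The only step requiring any care is the first one, i.e.\ checking the $U^{\le 0}$-module isomorphism $\Res\Delta(\lambda)\cong U^{\le 0}\otimes_{U^0}k_\lambda$; this is a routine consequence of the PBW theorem, but one must be mindful that we are working with Kostant's integral form, so the identification should be made after base change from $\DZ$ to $k$ using Theorem \ref{thm-PBW}(4). No further obstacle is expected.
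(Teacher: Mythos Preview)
Your proof is correct and rests on the same key identification as the paper's: via PBW one has $\Hom_{U^{\le 0}}(\Res\Delta(\lambda),M)\cong M_\lambda$ for any weight module $M$. The only difference is packaging: the paper uses this identification to directly split a given surjection $f\colon M\to\Res\Delta(\lambda)$ (lifting a vector-space splitting on the $\lambda$-weight space to a $U^{\le 0}$-splitting), whereas you phrase the same fact as a tensor--Hom adjunction and conclude projectivity from exactness of $M\mapsto M_\lambda$.
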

\begin{proof}
Let $f\colon M\to\Res\Delta(\lambda)$ be a surjective homomorphism in $U^{\le 0}\catmod^{wt}$. 
 Let $g_\lambda\colon\Res \Delta(\lambda)_\lambda\to M_\lambda$ be a right inverse  of $f_\lambda\colon M_\lambda\to\Res\Delta(\lambda)_\lambda$ (in the category of vector spaces). Then $g_\lambda$ is a $U^0$-module homomorphism. As $\Delta(\lambda)$ is free as a $U^-$-module of rank 1 (by Kostant's version of the PBW-theorem) and as it is generated by any non-zero element in the one-dimensional space $\Delta(\lambda)_\lambda$, we have $\Hom_{U^{\le 0}}(\Res\Delta(\lambda),M)=\Hom_{U^0}(\Res\Delta(\lambda)_\lambda,M_\lambda)$. So $g_\lambda$ induces a morphism $g\colon \Res\Delta(\lambda)\to M$ with the property that its $\lambda$-component is right inverse to $f_\lambda$. Hence $f\circ g$ is an endomorphism of $\Res\Delta(\lambda)$ that restricts to the identity on $\Res\Delta(\lambda)_\lambda$. Hence $f\circ g$ is the identity, so $f$ splits.  The claim follows.
\end{proof}

\begin{lemma} \label{lemma-VFsplits}
\begin{enumerate}
\item Let $N$ be an object in $\CO$ that admits a Verma flag. Then $\Res N$ is isomorphic to a direct sum of objects of the form $\Res \Delta(\lambda)$ for various $\lambda$. 
\item Let $M$ and $N$ be objects in $\CO$ and assume that $N$ admits a Verma flag and let  $f\colon M\to N$ be a surjective homomorphism. Then $\Res f\colon \Res M\to \Res N$ splits. 
\end{enumerate}
\end{lemma}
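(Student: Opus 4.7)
The plan is to prove (1) by induction on the length of the Verma flag, using Lemma \ref{lemma-Deltaproj} to split off the top layer, and then to deduce (2) from (1) together with the fact that finite direct sums of projective objects are projective.

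For part (1), I would fix a Verma flag $0=N_0\subset N_1\subset\dots\subset N_n=N$ with $N_i/N_{i-1}\cong\Delta(\lambda_i)$ and induct on $n$. The case $n=0$ is trivial and $n=1$ is tautological. For the inductive step, consider the short exact sequence
\[
0\to N_{n-1}\to N\to\Delta(\lambda_n)\to 0
\]
in $\CO$. Applying the restriction functor $\Res$ (which is exact, since it just forgets the action of $U^+$) gives a short exact sequence in $U^{\le 0}\catmod^{wt}$. By Lemma \ref{lemma-Deltaproj}, $\Res\Delta(\lambda_n)$ is projective, so this sequence splits, yielding $\Res N\cong \Res N_{n-1}\oplus \Res\Delta(\lambda_n)$. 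By the inductive hypothesis, $\Res N_{n-1}$ is a direct sum of objects of the form $\Res\Delta(\lambda)$, and assembling these decompositions proves (1).

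For part (2), observe that by (1) the object $\Res N$ is a finite direct sum of the projective objects $\Res\Delta(\lambda_i)$ in $U^{\le 0}\catmod^{wt}$. Finite direct sums (indeed arbitrary coproducts) of projective objects are projective in any cocomplete abelian category, so $\Res N$ is projective. Hence the surjection $\Res f\colon \Res M\to\Res N$ admits a section by the universal lifting property of projectives.

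I do not expect any serious obstacle here: both parts are formal consequences of Lemma \ref{lemma-Deltaproj} once one notices that $\Res$ preserves exactness (short exact sequences of weight modules in $\CO$ remain short exact after forgetting the $U^+$-action, since exactness is tested on weight spaces, or equivalently as $k$-vector spaces). The only mild point worth mentioning explicitly is that the Verma flag is assumed finite, so no issue about infinite coproducts arises.
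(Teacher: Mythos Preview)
Your proposal is correct and follows essentially the same approach as the paper: the paper's proof is just a terser version of your argument, invoking the projectivity of each $\Res\Delta(\lambda)$ (Lemma~\ref{lemma-Deltaproj}) for (1) and then concluding that $\Res N$ is projective to obtain (2). Your explicit induction on the Verma flag length simply unpacks what the paper leaves implicit.
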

\begin{proof} 
Statement (1) follows from the fact that each $\Res \Delta(\lambda)$ is projective in $U^{\le0}\catmod^{wt}$. From (1) and Lemma \ref{lemma-Deltaproj} it then also follows that $\Res N$ is projective in $U^{\le 0}\catmod^{wt}$, which implies statement (2).
\end{proof}

\subsection{Tensor products}

Let $L$ be an  object in $\CO$. Then the functor  $M\mapsto M\otimes L$ preserves the category of weight modules and the category of $U^+$-locally finite modules, hence it induces a functor from $\CO$ to $\CO$.

\begin{lemma}\label{lemma-TransVermaMult} Suppose that $L$ is finite dimensional. If $M$ admits a Verma flag, then so does $M\otimes L$  and we have 
$$
(M\otimes L:\Delta(\mu))=\sum_{\lambda\in X} (M:\Delta(\lambda))\dim_kL_{\mu-\lambda}.
$$
\end{lemma}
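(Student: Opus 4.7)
The plan is to induct on the length of a Verma flag of $M$, reducing to the base case $M=\Delta(\lambda)$, which is handled by an explicit filtration inside the induced module $U\otimes_{U^{\geq 0}}(k_\lambda\otimes L)$ via the tensor identity for induction from a Hopf subalgebra.

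For the base case $M=\Delta(\lambda)$, since $U^{\geq 0}\subset U$ is a Hopf subalgebra, there is a canonical isomorphism
$$
\Delta(\lambda)\otimes L=(U\otimes_{U^{\geq 0}}k_\lambda)\otimes L\xrightarrow{\sim}U\otimes_{U^{\geq 0}}(k_\lambda\otimes L)
$$
of $U$-modules, where $U^{\geq 0}$ acts diagonally on $k_\lambda\otimes L$ via the coproduct. Choose a weight basis $v_1,\dots,v_d$ of $L$ ordered so that the weights $\mu_i$ of $v_i$ satisfy $\mu_i>\mu_j\Rightarrow i<j$ (highest weights first), and set $F_i:=\mathrm{span}_k(v_\lambda\otimes v_1,\dots,v_\lambda\otimes v_i)\subset k_\lambda\otimes L$. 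Each $F_i$ is $U^0$-stable by construction, and is $U^+$-stable because $e_\alpha^{(n)}$ acts on $v_\lambda\otimes v_j$ via the coproduct $\sum_{r+s=n}e_\alpha^{(r)}\otimes e_\alpha^{(s)}$: for $r\geq 1$ the factor $e_\alpha^{(r)}$ kills $v_\lambda$ (since $U^+$ acts on $k_\lambda$ by augmentation), so only $r=0$ survives and gives $v_\lambda\otimes e_\alpha^{(n)}v_j$, which lies in the span of $v_\lambda\otimes v_k$ with $\mu_k=\mu_j+n\alpha>\mu_j$, hence with $k<j\leq i$. Consequently $F_i/F_{i-1}\cong k_{\lambda+\mu_i}$ as $U^{\geq 0}$-modules. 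Since $U$ is free as a right $U^{\geq 0}$-module by Theorem \ref{thm-PBW}(4), the functor $U\otimes_{U^{\geq 0}}(-)$ is exact, and applying it to $\{F_i\}$ yields a Verma flag of $\Delta(\lambda)\otimes L$ with subquotients $\Delta(\lambda+\mu_i)$ for $i=1,\dots,d$. In particular $(\Delta(\lambda)\otimes L:\Delta(\mu))=\dim_k L_{\mu-\lambda}$, which is the desired formula for $M=\Delta(\lambda)$.

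For the inductive step, choose a submodule $M_1\cong\Delta(\lambda_1)$ of $M$ from a Verma flag, so that $M/M_1$ admits a Verma flag of strictly smaller length. Exactness of $(-)\otimes_k L$ (since $L$ is finite-dimensional over $k$) yields a short exact sequence $0\to\Delta(\lambda_1)\otimes L\to M\otimes L\to(M/M_1)\otimes L\to 0$ in $\CO$. The base case applied to the left term and the induction hypothesis applied to the right term yield Verma flags that splice, via the preimage construction along the projection $M\otimes L\to (M/M_1)\otimes L$, to a Verma flag of $M\otimes L$. Additivity of multiplicities along this spliced flag, combined with $(M:\Delta(\lambda))=\delta_{\lambda,\lambda_1}+(M/M_1:\Delta(\lambda))$, gives the asserted formula by routine bookkeeping.

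The main technical point is the $U^+$-stability of the $F_i$ in the base case; this rests on the explicit form of the coproduct on the Chevalley generators $e_\alpha^{(n)}$ together with the fact that $v_\lambda$ is annihilated by the augmentation ideal of $U^+$. Once this is established, exactness of induction from the Hopf subalgebra $U^{\geq 0}$ delivers the Verma flag, and the rest is additive combinatorics.
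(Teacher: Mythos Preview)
Your proof is correct and follows essentially the same route as the paper. The paper reduces to $M=\Delta(\lambda)$ by exactness of $(\cdot)\otimes L$ and then defers to Section~3.6 of \cite{HBook} for the base case; you have written out exactly that argument---the tensor identity $\Delta(\lambda)\otimes L\cong U\otimes_{U^{\ge 0}}(k_\lambda\otimes L)$, the explicit $U^{\ge 0}$-filtration of $k_\lambda\otimes L$ by weight ordering, and exactness of induction via Theorem~\ref{thm-PBW}---adapted to the divided-power setting.
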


\begin{proof} Note that the functor $(\cdot)\otimes L$ is exact. It is hence enough to prove the statement for $M=\Delta(\lambda)$ for some $\lambda\in X$. In this case, the same arguments can be used as in the characteristic $0$ case, which can be found, for example, in Section 3.6 of \cite{HBook}.
\end{proof}

\subsection{Projective objects in truncated categories}

Let $ J $ be an open subset of $X$. 

\begin{definition} We denote by $\CO^J $ the full subcategory of $\CO$ that contains all objects $M$ that have the property that $M_\lambda\ne 0$ implies $\lambda\in J $.
\end{definition}
Clearly, the functor $M\mapsto M^J $ is actually a functor from $\CO$ to $\CO^J $, and we have $M\in\CO^J $ if and only if  $M=M^J $. Note that $L(\lambda)$ is contained in $\CO^J $ if and only if $\Delta(\lambda)$ is contained in $\CO^J $, which is the case if and only if $\lambda\in J $.

\begin{definition} We say that an open subset  $ J $ of $X$ is {\em quasi-bounded} if  for any $\lambda\in X$ the set $\{\mu\in J \mid \lambda\le\mu\}$ is finite.
\end{definition}
 
 Recall that in an abelian category $\CA$ an epimorphism $f\colon A\to B$ is called {\em essential} if the following holds: if $g\colon C\to A$ is a morphism in $\CA$ such that $f\circ g\colon C\to B$ is an epimorphism, then $g$ is an epimorphism. A morphism $f\colon A\to B$ is called a {\em projective cover} of $B$ if $A$ is projective and $f$ is an essential epimorphism. 

\begin{theorem}\label{thm-projcov} Suppose that $ J \subset X$ is open and quasi-bounded. For any $\lambda\in J $ there exists a projective cover $p_\lambda\colon P^{ J }(\lambda)\to L(\lambda)$ of $L(\lambda)$ in $\CO^{ J }$, and the object  $P^{ J }(\lambda)$ admits a Verma flag.
\end{theorem}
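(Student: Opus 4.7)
The plan is to adapt the Rocha--Caridi--Wallach construction from characteristic zero (cf.\ \cite{RCW,HBook}): I realize $P^J(\lambda)$ as an indecomposable direct summand of the $\CO^J$-truncation of a projective object of $\CO$ of the form $P(\nu_0)\otimes V(\gamma)$, and invoke Krull--Remak--Schmidt (Proposition \ref{prop-KRS}) to select the correct summand. The key modular input, cited in the introduction, is the existence of projective covers $P(\nu_0)\in\CO$ for antidominant weights $\nu_0$ (see \cite{A}); without this one would have few projectives in $\CO$ to start from, since Verma modules are typically not projective in $\CO$.

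Fix $\lambda\in J$ and choose a dominant $\gamma\in X^+$ large enough that $\nu_0:=\lambda-\gamma$ is antidominant. Let $P(\nu_0)\in\CO$ be the projective cover of $L(\nu_0)$, set $V:=V(\gamma)$, $Q:=P(\nu_0)\otimes V$, and $P:=Q^J$. Since $V$ is finite-dimensional, $(-)\otimes V$ is exact with two-sided adjoint $(-)\otimes V^\ast$, hence preserves projectivity, so $Q$ is projective in $\CO$. The truncation $(-)^J$ is left adjoint to the inclusion $\CO^J\hookrightarrow\CO$: for $N\in\CO^J$ any morphism $M\to N$ kills the submodule generated by the weights in the complement of $J$ and hence factors through $M^J$, giving $\Hom_{\CO^J}(M^J,N)=\Hom_\CO(M,N)$. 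Left adjoints preserve projectivity (the right adjoint, inclusion, being exact), so $P$ is projective in $\CO^J$. The projective cover $P(\nu_0)$ admits a Verma flag (a standard property of such covers, obtainable from the paper's forthcoming discussion of BGGH reciprocity); Lemma \ref{lemma-TransVermaMult} transports this flag to $Q$, and Proposition \ref{prop-VermaSub} restricts it to a Verma flag on $P$, in which $\Delta(\lambda)=\Delta(\nu_0+\gamma)$ occurs at least once, since $(P(\nu_0):\Delta(\nu_0))\cdot \dim V_\gamma\geq 1$. By Krull--Remak--Schmidt and Lemma \ref{lemma-dirsumVF}, $P$ decomposes as a finite direct sum of indecomposable projectives in $\CO^J$, each admitting a Verma flag; by Lemma \ref{lemma-endlocal}, each is the projective cover of its unique simple head.

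The main technical obstacle is to verify that some summand has head exactly $L(\lambda)$. The adjunction chain gives
\begin{equation*}
\dim\Hom_{\CO^J}(P,L(\lambda))=\dim\Hom_\CO(P(\nu_0),V^\ast\otimes L(\lambda))=[V^\ast\otimes L(\lambda):L(\nu_0)],
\end{equation*}
the last equality using that $\Hom_\CO(P(\nu_0),-)$ computes the composition-factor multiplicity of $L(\nu_0)$ (with the formula justified along the composition series by projectivity of $P(\nu_0)$). To see this is positive, observe that the $\nu_0$-weight space of $V^\ast\otimes L(\lambda)$ contains the line $V^\ast_{-\gamma}\otimes L(\lambda)_\lambda$; a composition-factor analysis, using antidominance of $\nu_0$ and a sufficiently dominant choice of $\gamma$ so that no composition factor $L(\tau)$ with $\tau>\nu_0$ can absorb this weight space, forces $L(\nu_0)$ itself to occur. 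The corresponding summand $P^J(\lambda)$ of $P$ then admits a surjection onto $L(\lambda)$, and by indecomposability together with the locality of $\End_{\CO^J}(P^J(\lambda))$ (Lemma \ref{lemma-endlocal}) this surjection is essential, yielding the desired projective cover. The Verma flag of $P^J(\lambda)$ is inherited from that of $P$ by Lemma \ref{lemma-dirsumVF}.
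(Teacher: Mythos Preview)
Your overall strategy---build a projective object in $\CO^J$ with a Verma flag that sees $\Delta(\lambda)$, then peel off the right indecomposable summand via Krull--Remak--Schmidt---is the same as the paper's. But the paper produces the projective object very differently, and your specific construction has a genuine gap.

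The paper's construction is self-contained: it induces the finite-dimensional $U^{\ge0}$-module $C(\lambda)^J=(U^{\ge0}\otimes_{U^0}k_\lambda)^J$ to $Q^J(\lambda)=U\otimes_{U^{\ge0}}C(\lambda)^J$. A short adjunction chain gives $\Hom_{\CO^J}(Q^J(\lambda),N)\cong N_\lambda$, so projectivity is immediate, and since $\lambda$ is the \emph{minimal} weight of $C(\lambda)^J$, the subquotient $\Delta(\lambda)$ sits at the bottom of the Verma flag of $Q^J(\lambda)$ and is therefore a quotient; the surjection onto $L(\lambda)$ follows. No appeal to \cite{A} is needed.

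Your construction starts instead from $Q=P(\nu_0)\otimes V(\gamma)$ with $\nu_0=\lambda-\gamma$ antidominant. The problematic step is showing $\Hom_{\CO^J}(Q^J,L(\lambda))\ne0$. By your own adjunction this equals $\Hom_\CO(Q,L(\lambda))$, so truncation does not help: you must exhibit $L(\lambda)$ as a quotient of the projective object $Q$ in the \emph{full} category $\CO$. Your ``composition-factor analysis'' for $[V^\ast\otimes L(\lambda):L(\nu_0)]>0$ is not an argument---the vector $v^\ast_{-\gamma}\otimes v_\lambda$ is \emph{not} a highest weight vector (the $e_\alpha^{(n)}$ do not kill $v^\ast_{-\gamma}$), and ``choosing $\gamma$ more dominant'' changes the entire module, so there is no monotonicity to exploit. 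Worse, there is a structural obstruction: if $P(\nu_0)$ is finitely generated, then by Proposition~\ref{prop-KRS} and Lemma~\ref{lemma-endlocal} the projective $Q$ decomposes as $\bigoplus_i P(\mu_i)$ in $\CO$, where each $L(\mu_i)$ necessarily admits a projective cover in $\CO$. Thus $L(\lambda)$ can appear in the head of $Q$ only if $L(\lambda)$ itself has a projective cover in $\CO$---precisely the restrictive hypothesis (antidominance, per \cite{A}) you were trying to bypass. For a general $\lambda\in J$ your $Q^J$ need not surject onto $L(\lambda)$ at all.

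Note also that in the characteristic-zero template you cite, one starts from $\Delta(\mu)$ with $\mu$ \emph{dominant} (hence projective) and tensors so that $\Delta(\lambda)$ lands at the \emph{bottom} of the resulting Verma flag, giving the required quotient. Your antidominant starting point reverses this: in the Verma flag of $P(\nu_0)\otimes V(\gamma)$ the subquotient $\Delta(\lambda)=\Delta(\nu_0+\gamma)$ comes from the highest weight $\gamma$ of $V$, so it sits toward the top rather than the bottom, and there is no reason for it to be a quotient.
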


\begin{proof} Consider the $U^{\ge 0}$-module $C(\lambda):=U^{\ge 0}\otimes_{U^0} k_\lambda$. Theorem \ref{thm-PBW} implies that $C(\lambda)$ is free of rank $1$ as a $U^{+}$-module. As an $U^0$-module it is a weight module and its weights are contained in $\{\mu\mid\mu\ge \lambda\}$. As before, define $C(\lambda)^J $  as the largest quotient $U^{\ge 0}$-module with all weights contained in  $ J $, i.e. $C(\lambda)^J=C(\lambda)/\bigoplus_{\mu\not\in J}C(\lambda)_\mu$.  As $ J $ is supposed to be quasi-bounded, $C(\lambda)^{ J }$ is a finite dimensional $U^{\ge 0}$-module and a weight module supported in $\{\mu\mid\mu\in J ,\mu\ge\lambda\}$. It hence has a $U^{\ge0}$-module filtration with one-dimensional subquotients isomorphic to $k_\mu$ for various $\mu\in  J $ (with $\mu\ge\lambda$). Moreover, $k_\mu$ occurs with multiplicity $\dim_k C(\lambda)^{ J }_\mu$.

Now consider $Q^{ J }(\lambda):=U\otimes_{U^{\ge0}} C(\lambda)^J $. This is a weight module, and, as the functor $U\otimes_{U^{\ge0}}(\cdot)$ is exact (by Theorem \ref{thm-PBW}), it has a finite filtration with subquotients isomorphic to $\Delta(\mu)$ for various $\mu\in J $ (with $\mu\ge\lambda$). Hence it is an object in $\CO^J $ that admits a Verma flag and the multiplicities are given by
$$
(Q^{ J }(\lambda):\Delta(\mu))=\dim_k C(\lambda)^{ J }_{\mu}.
$$ 
Now let $N$ be an object in $\CO^{ J }$. We then have a natural isomorphisms  (that are functorial in $N$):
\begin{align*}
\Hom_{\CO^{ J }}(Q^{ J }(\lambda),N)&=\Hom_{U}(U\otimes_{U^{\ge0}} C(\lambda)^J ,N)\\
&=\Hom_{U^{\ge 0}}(C(\lambda)^J ,N)\\
&=\Hom_{U^{\ge0}}(C(\lambda),N) \quad\text{ (as $N=N^J $)}\\
&=\Hom_{U^{\ge0}}(U^{\ge 0}\otimes_{U^0} k_\lambda,N)\\
&=\Hom_{U^{0}}(k_\lambda,N)\\
&\cong N_\lambda.
\end{align*}
The functor $N\mapsto N_\lambda$ is exact on $\CO^{ J }$, so the object $Q^{ J }(\lambda)$ is projective in $\CO^{ J }$. 

By the weight considerations above, $\Delta(\lambda)$ appears in a Verma flag of $Q^{ J }(\lambda)$ with multiplicity $1=\dim_k C(\lambda)_\lambda$ and it is minimal among the highest weights of Verma subquotients. So there exists a surjection $Q^{ J }(\lambda)\to\Delta(\lambda)$ in $\CO^{ J }$. As $Q^{ J }(\lambda)$ is finitely generated (even cyclic) as a $U$-module we can apply Proposition \ref{prop-KRS}, so $Q^J(\lambda)$ splits into a finite direct sum of indecomposables. As a homomorphism $M\to\Delta(\lambda)$ is surjective if and only if it is surjective on the $\lambda$-weight space, and as $\Delta(\lambda)_\lambda$ is of dimension $1$,  there must be an indecomposable direct summand $P^{ J }(\lambda)$ of $Q^{ J }(\lambda)$ that admits a surjection onto $\Delta(\lambda)$. Then $P^{ J }(\lambda)$ is projective and admits a Verma flag by Lemma \ref{lemma-dirsumVF}. 

As $L(\lambda)$ is a quotient of $\Delta(\lambda)$, there exists a surjective homomorphism $f\colon P^{ J }(\lambda)\to L(\lambda)$. 
It remains to show that this is an essential homomorphism. So let $g\colon M\to P^{ J }(\lambda)$ be a morphism such that $f\circ g\colon M\to L(\lambda)$ is an epimorphism. The projectivity of $P^{ J }(\lambda)$ allows us to find a morphism $h\colon P^{ J }(\lambda)\to M$ such that the diagram 

\centerline{
\xymatrix{
P^{ J }(\lambda)\ar[r]^h\ar[dr]_f&M\ar[r]^g\ar[d]^{f\circ g}&P^{ J }(\lambda)\ar[dl]^f\\
&L(\lambda)&
}
}
\noindent
commutes. As $f$ is an epimorphism,  the composition $g\circ h$ cannot be nilpotent, hence must be an automorphism of $P^{ J }(\lambda)$ by Lemma \ref{lemma-endlocal}. In particular, $g$ is an epimorphism. So $f$ is essential and indeed a projective cover. 
\end{proof}

\subsection{Verma multiplicities and the BGGH-reciprocity}
We are now going to prove an analogue of the reciprocity result that appeared in \cite{H2} in the context of representations of modular Lie algebras. 
\begin{proposition}\label{prop-Vermamult} Let $M$ be an object in $\CO$ that admits a Verma flag. Then
$$
(M:\Delta(\mu))=\dim\Hom_{\CO}(M,\nabla(\mu))
$$
for all $\mu\in X$.
\end{proposition}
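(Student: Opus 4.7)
The plan is to proceed by induction on the length $n$ of a Verma flag of $M$.

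For the base case $n=0$ both sides are zero, and for $n=1$ we have $M\cong\Delta(\lambda_1)$, in which case $(\Delta(\lambda_1):\Delta(\mu))=\delta_{\lambda_1,\mu}$ equals $\dim\Hom_\CO(\Delta(\lambda_1),\nabla(\mu))$ by the first assertion of Lemma \ref{lemma-ExtVan}.

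For the inductive step I would invoke Lemma \ref{lemma-reorderVF} to arrange a Verma flag $0=M_0\subset M_1\subset\dots\subset M_n=M$ such that $\lambda_n$ is \emph{minimal} among the weights $\lambda_1,\dots,\lambda_n$; equivalently, the top quotient $M/M_{n-1}\cong\Delta(\lambda_n)$ has a highest weight that is minimal in the flag. Then from the short exact sequence
$$
0\to M_{n-1}\to M\to \Delta(\lambda_n)\to 0
$$
and the left exactness of $\Hom_\CO(\cdot,\nabla(\mu))$ together with the vanishing $\Ext^1_\CO(\Delta(\lambda_n),\nabla(\mu))=0$ provided by Lemma \ref{lemma-ExtVan}, I obtain an exact sequence
$$
0\to \Hom_\CO(\Delta(\lambda_n),\nabla(\mu))\to \Hom_\CO(M,\nabla(\mu))\to \Hom_\CO(M_{n-1},\nabla(\mu))\to 0.
$$
Taking dimensions and using Lemma \ref{lemma-ExtVan} again together with the inductive hypothesis applied to $M_{n-1}$ (which admits a Verma flag of length $n-1$) gives
$$
\dim\Hom_\CO(M,\nabla(\mu))=\delta_{\lambda_n,\mu}+(M_{n-1}:\Delta(\mu))=(M:\Delta(\mu)),
$$
where the last equality uses the definition of Verma multiplicity and its additivity in short exact sequences of Verma-filtered objects.

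The central ingredient is thus Lemma \ref{lemma-ExtVan}, which supplies both the one-dimensionality of $\Hom_\CO(\Delta(\lambda),\nabla(\mu))$ (forcing it to be $\delta_{\lambda,\mu}$) and the Ext-vanishing needed to split the long exact sequence. The only point requiring a little care is to ensure that all the Hom-spaces involved are finite-dimensional so that the dimension count is legitimate; this follows because $\Hom_\CO(\Delta(\lambda_i),\nabla(\mu))$ is at most one-dimensional for every $i$, and the Hom-space from an object with a finite Verma flag into $\nabla(\mu)$ is therefore bounded by the length of the flag. I do not expect any serious obstacle beyond the (harmless) bookkeeping of choosing the Verma flag with a minimal top term.
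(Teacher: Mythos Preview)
Your argument is correct and follows essentially the same inductive scheme as the paper's proof: peel off the top Verma quotient, apply $\Hom_\CO(-,\nabla(\mu))$, and use Lemma~\ref{lemma-ExtVan} to truncate the long exact sequence. The one superfluous step is your appeal to Lemma~\ref{lemma-reorderVF}: since $\Ext^1_\CO(\Delta(\lambda),\nabla(\mu))=0$ for \emph{all} $\lambda,\mu\in X$, there is no need for $\lambda_n$ to be minimal, and the paper simply takes any Verma flag without reordering.
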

\begin{proof} Suppose that $M^\prime\subset M$ is a submodule such that $M^\prime$ admits a Verma flag and $M/M^\prime\cong\Delta(\nu)$ for some $\nu\in X$. Hence
$$
(M:\Delta(\mu))=(M^\prime:\Delta(\mu))+\delta_{\mu,\nu}
$$
for all $\mu\in X$. 
Since $\Ext^1_\CO(\Delta(\nu),\nabla(\mu))=0$ by Lemma \ref{lemma-ExtVan},  we have an exact sequence
$$
0\to \Hom(\Delta(\nu),\nabla(\mu))\to \Hom(M,\nabla(\mu))\to \Hom(M^\prime,\nabla(\mu))\to 0,
$$
and, again using Lemma \ref{lemma-ExtVan}, we deduce
$$
\dim_k\Hom(M,\nabla(\mu))=\dim_k\Hom(M^\prime,\nabla(\mu))+\delta_{\nu,\mu}.
$$
The statement follows by induction on the length of a Verma flag.
\end{proof}

Now we are ready to state and prove the mentioned reciprocity result. It appears as statement (3) in the following.
 \begin{proposition} Fix an open subset $ J $ of $X$ and $\lambda\in J $.
\begin{enumerate}
\item The only irreducible quotient of $P^J (\lambda)$ is $L(\lambda)$.
\item Suppose that $M$ is a finitely generated object in $\CO^J $. Then
$$
\dim_k\Hom_{\CO^J}(P^{ J }(\lambda),M)=[M:L(\lambda)].
$$
\item Let $\mu\in X$. Then
$$
(P^J (\lambda):\Delta(\mu))=
\begin{cases}
[\nabla(\mu):L(\lambda)],&\text{ if $\mu\in J $},\\
0,&\text{ if $\mu\not\in J $}.
\end{cases}
$$
\end{enumerate}
\end{proposition}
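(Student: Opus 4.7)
My plan is to prove (1) from the projective-cover property, to prove (2) by a triangular-inversion argument based on an explicit decomposition of $Q^J(\lambda)$, and to reduce (3) to (2) via the Verma-multiplicity formula of Proposition~\ref{prop-Vermamult}.

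For (1), the map $p_\lambda\colon P^J(\lambda)\to L(\lambda)$ is essential, so its kernel $K$ is a superfluous submodule of $P^J(\lambda)$. Any maximal submodule $M\subset P^J(\lambda)$ must contain $K$, for otherwise maximality would force $M+K=P^J(\lambda)$, contradicting superfluousness. Hence every simple quotient of $P^J(\lambda)$ is a simple quotient of $P^J(\lambda)/K=L(\lambda)$, and therefore equals $L(\lambda)$. Maximal submodules exist because $P^J(\lambda)$ is cyclic: lifting a generator of $L(\lambda)$ to $v_\lambda\in P^J(\lambda)_\lambda$, the submodule $U.v_\lambda$ surjects onto $L(\lambda)$ and so cannot lie in $K$, forcing $U.v_\lambda=P^J(\lambda)$; Zorn's lemma applied to proper submodules (none of which contain $v_\lambda$) then yields maximal submodules.

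For (2), the idea is to pin down the Krull--Remak--Schmidt decomposition of $Q^J(\lambda)$ and play the Hom-identity of Theorem~\ref{thm-projcov} against the character identity. By Proposition~\ref{prop-KRS} the cyclic object $Q^J(\lambda)$ decomposes into finitely many indecomposable projective summands; the local-endomorphism-ring argument from the proof of Theorem~\ref{thm-projcov} identifies each summand with the projective cover of its unique simple top, hence with some $P^J(\mu)$. Using (1), one has $\Hom_{\CO^J}(P^J(\nu),L(\mu))=\delta_{\nu,\mu}\cdot k$, so the multiplicity of $P^J(\mu)$ in $Q^J(\lambda)$ equals $\dim_k\Hom_{\CO^J}(Q^J(\lambda),L(\mu))=\dim_kL(\mu)_\lambda$ by the Hom-identity. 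Quasi-boundedness of $J$ makes $K_\lambda:=\{\mu\in J\mid\mu\ge\lambda\}$ finite, so
$$
Q^J(\lambda)\cong\bigoplus_{\mu\in K_\lambda}P^J(\mu)^{\dim_kL(\mu)_\lambda}.
$$
Applying $\Hom_{\CO^J}(-,M)$ and comparing with the character identity $\dim_kM_\lambda=\sum_{\mu\in K_\lambda}(M:L(\mu))\cdot\dim_kL(\mu)_\lambda$ gives
$$
\sum_{\mu\in K_\lambda}\dim_kL(\mu)_\lambda\cdot\bigl(\dim_k\Hom_{\CO^J}(P^J(\mu),M)-(M:L(\mu))\bigr)=0
$$
for every $\lambda\in J$. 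Since $\dim_kL(\mu)_\lambda$ vanishes unless $\lambda\le\mu$ and equals $1$ for $\lambda=\mu$, this is an upper-triangular system on the finite poset $K_\lambda$; descending induction starting from a maximal element of $K_\lambda$ (applying the above equation at $\lambda'\in K_\lambda$ and noting that $K_{\lambda'}\subset K_\lambda$) yields the desired equality $\dim_k\Hom_{\CO^J}(P^J(\lambda),M)=(M:L(\lambda))$. The main obstacle, and the reason for this triangular-inversion strategy, is that finitely generated objects in $\CO^J$ need not have finite composition length (since $J$ need not be bounded from below), so the classical BGG-style induction on composition length is not available; the triangular trick circumvents this by reducing the problem at each $\lambda$ to a single linear relation on the finitely many unknowns indexed by $K_\lambda$.

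For (3), the case $\mu\notin J$ is immediate: any $\Delta(\nu)$ appearing in a Verma flag of $P^J(\lambda)\in\CO^J$ has highest weight $\nu$ which is a weight of $P^J(\lambda)$, hence $\nu\in J$, so $(P^J(\lambda):\Delta(\mu))=0$. For $\mu\in J$, Proposition~\ref{prop-Vermamult} rewrites the left-hand side as $\dim_k\Hom_\CO(P^J(\lambda),\nabla(\mu))$, and I would apply (2) to $M=\nabla(\mu)$; this is legitimate because $\nabla(\mu)$ lies in $\CO^J$ (as $\mu\in J$ and $J$ is open) and has finite-dimensional weight spaces bounded above by $\mu$, and these are the only properties of $M$ actually used by the triangular-inversion argument of (2) (finite generation never enters).
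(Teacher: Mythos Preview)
Your proofs of (1) and (3) coincide with the paper's: part (1) via the essential property of the projective cover, and part (3) by combining Proposition~\ref{prop-Vermamult} with part (2) applied to $M=\nabla(\mu)$.

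For (2), however, you take a different and more careful route. The paper dispatches (2) in one line---``the projectivity of $P^J(\lambda)$ and part (1) imply part (2)''---which is the standard d\'evissage along a composition series. You rightly observe that in the modular setting finitely generated objects of $\CO^J$ (already Verma modules) need not have finite length, so that induction is not literally available. Your workaround---computing the Krull--Remak--Schmidt decomposition $Q^J(\lambda)\cong\bigoplus_{\mu\ge\lambda}P^J(\mu)^{\dim L(\mu)_\lambda}$ from the Hom-identity of Theorem~\ref{thm-projcov} and then inverting the unitriangular system obtained by playing $\dim\Hom(Q^J(\lambda),M)=\dim M_\lambda$ against the character identity---is correct and self-contained. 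It also buys you something the paper glosses over: since your argument uses only that $M$ has finite-dimensional weight spaces bounded from above (not finite generation), it applies directly to $M=\nabla(\mu)$ in part (3), whereas the paper's statement of (2) literally requires $M$ to be finitely generated, a hypothesis $\nabla(\mu)$ may fail in positive characteristic.
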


\begin{proof} Let $M\subset P^J (\lambda)$ be a submodule with $M\ne P^{ J }(\lambda)$.  Let $f\colon P^{ J }(\lambda)\to L(\lambda)$ be a projective cover. Then $f(M)=0$. In particular, if $P^{ J }(\lambda)/M$ is irreducible, then $f$ must induce an isomorphism $P^{ J }(\lambda)/M\cong L(\lambda)$, hence (1). The projectivity of $P^{ J }(\lambda)$ and part (1) imply part (2). Using Proposition \ref{prop-Vermamult} we calculate
\begin{align*}
(P^J (\lambda):\Delta(\mu))&=\dim_k\Hom_{\CO^{ J }}(P^J (\lambda),\nabla(\mu)\\
&=[\nabla(\mu):L(\lambda)]\quad\text{ (by (2)).}\\
\end{align*}
Hence (3).
\end{proof}
Note that $[\nabla(\mu):L(\lambda)]=[\Delta(\mu):L(\lambda)]$ as the characters of the Verma and the dual Verma modules agree. 
\subsection{The exactness of $(\cdot)_I  $ and $(\cdot)^J $} 
Let us denote by $\CV$ the full subcategory of $\CO$ that contains all objects that admit a Verma flag. In general, it does not contain the subobjects and quotients of its objects, so it is not an abelian subcategory. However, it inherits  an exact structure from the surrounding category $\CO$.
 We call a short sequence $0\to A\to B\to C\to 0$ in $\CV$ {\em exact} if it is exact when considered as a sequence in $\CO$. 
 
 Let $ J $ be an open subset of $X$ with closed complement $ I $. Proposition \ref{prop-VermaSub} implies that $(\cdot)_I  $ and $(\cdot)^J $ induce  endofunctors on $\CV$ that we denote by the same symbol. Then we have the following.

\begin{proposition}\label{prop-TruncExact}
Let $0\to A\to B\to C\to 0$ be a sequence in $\CV$. Then the following are equivalent.
\begin{enumerate}
\item The sequence $0\to A\to B\to C\to 0$ is exact.
\item For any open subset $ J $ of $X$, the sequence $0\to A^J \to B^J \to C^J \to 0$ is exact.
\item For any closed subset $ I $ of $X$, the sequence $0\to A_I  \to B_I  \to C_I  \to 0$ is exact.
\end{enumerate}
\end{proposition}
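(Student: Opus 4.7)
The plan is as follows. Since $X$ is both open and closed as a subset of itself, and for any $M$ one has $M^X = M$ and $M_X = M$, the implications $(2) \Rightarrow (1)$ and $(3) \Rightarrow (1)$ are immediate: specialize to $J = X$, respectively $I = X$. The content of the proposition is therefore to deduce $(2)$ and $(3)$ from $(1)$.

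The central observation is that since $I \subset X$ is closed (i.e.\ up-closed in the dominance order), the subspace $\bigoplus_{\lambda \in I} M_\lambda$ is already stable under $U^+$: applying $e_\alpha^{(n)}$ to a weight vector of weight $\lambda \in I$ yields a vector of weight $\lambda + n\alpha \ge \lambda$, which again lies in $I$. Combined with the triangular decomposition $U = U^- U^0 U^+$ (Theorem~\ref{thm-PBW}), this shows that the $U$-submodule $M_I$ and the $U^{\le 0}$-submodule of $\Res M$ generated by $\bigoplus_{\lambda \in I} M_\lambda$ coincide as subsets of $M$. In other words, truncation by a closed subset is intrinsic to the $\Res$-structure.

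To prove $(1) \Rightarrow (3)$, apply Lemma~\ref{lemma-VFsplits}(2) to the surjection $B \to C$, whose target admits a Verma flag: the restricted sequence $0 \to \Res A \to \Res B \to \Res C \to 0$ splits in $U^{\le 0}\catmod^{wt}$, giving a weight-space-preserving decomposition $\Res B \cong \Res A \oplus \Res C$. Taking on both sides the $U^{\le 0}$-submodule generated by the weight spaces indexed by $I$, and using the observation of the previous paragraph, yields a direct-sum decomposition $B_I \cong A_I \oplus C_I$ compatible with the $U$-module maps $A_I \hookrightarrow B_I$ and $B_I \to C_I$ inherited from the original sequence. The short sequence $0 \to A_I \to B_I \to C_I \to 0$ is therefore split exact, which establishes $(3)$.

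Finally, to derive $(2)$, consider the commutative $3 \times 3$ diagram whose rows are the tautological short exact sequences
$$
0 \to M_I \to M \to M^J \to 0 \qquad \text{for } M = A, B, C,
$$
and whose columns are the maps induced by $A \to B \to C$. The three rows are exact by definition of the functors, the middle column is exact by hypothesis $(1)$, and the left column has just been shown to be exact. The nine lemma now forces the right column to be exact, yielding $(2)$. The only point where the assumption that all three terms lie in $\CV$ is really used is in invoking Lemma~\ref{lemma-VFsplits}; the subsequent diagram chase is purely formal.
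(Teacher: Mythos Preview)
Your proof is correct and follows essentially the same route as the paper's. The paper also reduces to the observation that $(\cdot)_I$ can be computed after applying $\Res$ (since $\bigoplus_{\lambda\in I}M_\lambda$ is $U^{\ge0}$-stable), invokes the splitting from Lemma~\ref{lemma-VFsplits} to conclude $(1)\Rightarrow(3)$, and then deduces $(2)$ from $(1)$ and $(3)$ via the snake lemma, which amounts to your nine-lemma argument. One small point of phrasing: the decomposition $B_I\cong A_I\oplus C_I$ you obtain is only a splitting of $U^{\le0}$-modules, not of $U$-modules, so the sequence is not literally ``split exact'' in $\CV$; but of course exactness is all that is claimed and all that is needed, and it is detected at the level of $\Res$.
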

\begin{proof} As $X$ is open in $X$,   property (1) is a special case of either one of the properties (2) or (3). It is hence enough to show that (1) implies (2) and (3). Let $M$ be an object in $\CO$. Then $\bigoplus_{\lambda\in  I }M_\lambda$ is stable under the action of the subalgebra $U^{\ge 0}$ (as $ I $ is closed). Hence $M_I  $ is generated by $\bigoplus_{\lambda\in  I }M_\lambda$ not only as a $U$-submodule, but already as a $U^-$-submodule, and, in particular, as a $U^{\le 0}$-submodule. As the algebra $U^{\le 0}$ detects the weight decomposition, we deduce that the functor $(\cdot)_I  $ factors through the restriction functor $\Res\colon\CO\to U^{\le 0}\catmod^{wt}$. But in the category $U^{\le 0}\catmod^{wt}$ the short exact sequence appearing in (1) splits (cf. Lemma \ref{lemma-VFsplits}). As $(\cdot)_I  $ is additive, it produces an exact sequence when applied to a split sequence. Hence (1) implies (3). By the snake lemma, (1) and (3) imply (2).
\end{proof}

\subsection{Subcategories associated to locally closed subsets}\label{subsec-subcat}
Let $K$ be a locally closed subset of $X$.

\begin{definition} We denote by $\CV({K})$ the full subcategory of category $\CV$ that contains all objects $M$ that have the property that $(M:\Delta(\lambda))\ne 0$ implies $\lambda\in K$.
\end{definition}

Now let $\gamma\in X$ be a {\em dominant} weight. Then $L=L(\gamma)$ is finite dimensional. Denote by $\Gamma=\Hom_k(L,k)$ the dual $U$-module (without the $\sigma$-twist!). It is a finite dimensional module contained in $\CO$ and the irreducible $U$-module of lowest weight $-\gamma$. Let $ J ^\prime\subset J $ be open subsets of $X$. Then $\tJ^\prime= J ^\prime+\gamma\subset\tJ= J +\gamma$ are open in $X$ as well. We define the {\em shift functors}
\begin{eqnarray*}
\bU:=((\cdot)\otimes L)_{X\setminus\tJ^\prime}\colon \CO\to\CO,\\
\bD:=((\cdot)\otimes \Gamma)^{ J }\colon \CO\to\CO.
\end{eqnarray*}

Set $K:= J \setminus J ^\prime$  and $\tK:=\tJ\setminus \tJ^\prime$. Then $K$ and $\tK$ are locally closed, and the map  $(\cdot)+\gamma$ yields a bijection $K\xrightarrow{\sim}\tK$.  

\begin{lemma}\label{lemma-updownVerma}
 Suppose that there exists some $l>0$  such that the triple $( J , J ^\prime,\gamma)$ satisfies the following assumptions:
 \begin{itemize}
 \item  $\gamma\in p^lX$,
 \item  for all $\lambda\in J $ and $\nu>0$ we have $\lambda-p^l\nu\in J ^\prime$, i.e. $K\cap (K-p^l\nu)=\emptyset$. 
\end{itemize} 
Then the following holds.  
\begin{enumerate}
\item Let $\lambda\in K$. Then $\bU(\Delta(\lambda))\cong\Delta(\lambda+\gamma)$.
\item  Let $\tilde\lambda\in\tK$. Then $\bD(\Delta(\tilde\lambda))\cong\Delta(\tilde\lambda-\gamma)$.
\item After restriction, $\bU$ and $\bD$ induce exact functors
$\bU\colon\CV(K)\to\CV(\tK)$ and $\bD\colon\CV(\tK)\to\CV(K)$.
\end{enumerate}
\end{lemma}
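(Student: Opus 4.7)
The plan is to carry out everything at the level of Verma flags, using Lemma~\ref{lemma-TransVermaMult} to compute the multiplicities in $(\cdot)\otimes L$ and $(\cdot)\otimes\Gamma$, and Proposition~\ref{prop-VermaSub} to track what survives the truncations $(\cdot)_{X\setminus\tJ^\prime}$ and $(\cdot)^{ J }$. The preliminary step is a weight description of $L$ and $\Gamma$. Since $\gamma\in p^lX$ is dominant, one can write $\gamma=p^l\gamma_0$ with $\gamma_0$ dominant, and iterating Lemma~\ref{lemma-FrobTwistSimples} gives $L(\gamma)\cong L(\gamma_0)^{[l]}$. Hence the weights of $L$ are exactly the $\gamma-p^l\eta$ where $\gamma_0-\eta$ is a weight of $L(\gamma_0)$, with the extremal weight $\gamma$ occurring with multiplicity one; dually, the weights of $\Gamma$ are the $-\gamma+p^l\eta$, with $-\gamma$ of multiplicity one.

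For (1), Lemma~\ref{lemma-TransVermaMult} exhibits a Verma flag of $\Delta(\lambda)\otimes L$ whose subquotients are the $\Delta(\lambda+\gamma-p^l\eta)$ with multiplicities $\dim_k L_{\gamma-p^l\eta}$. For $\eta=0$ the highest weight is $\lambda+\gamma\in\tK$, which lies in $X\setminus\tJ^\prime$; for $\eta>0$ the second bullet of the hypothesis, applied to $\lambda\in J$, gives $\lambda-p^l\eta\in J^\prime$, so $\lambda+\gamma-p^l\eta\in\tJ^\prime$. Applying Proposition~\ref{prop-VermaSub} to the closed set $X\setminus\tJ^\prime$ then kills every Verma subquotient except $\Delta(\lambda+\gamma)$, yielding $\bU(\Delta(\lambda))\cong\Delta(\lambda+\gamma)$. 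Statement (2) is analogous: the Verma subquotients of $\Delta(\tilde\lambda)\otimes\Gamma$ are the $\Delta(\tilde\lambda-\gamma+p^l\eta)$, and for $\eta>0$ the assumption that $\mu:=\tilde\lambda-\gamma+p^l\eta\in J$ would, via the hypothesis applied to $\mu\in J$ and $\nu=\eta$, force $\tilde\lambda-\gamma=\mu-p^l\eta\in J^\prime$, contradicting $\tilde\lambda-\gamma\in K$. Hence only the $\eta=0$ term lies in $ J $, and Proposition~\ref{prop-VermaSub} applied to the open set $ J $ gives $\bD(\Delta(\tilde\lambda))\cong\Delta(\tilde\lambda-\gamma)$.

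For (3), tensoring over $k$ with the finite dimensional modules $L$ and $\Gamma$ is exact on $\CO$, and by Proposition~\ref{prop-TruncExact} the truncations $(\cdot)_{X\setminus\tJ^\prime}$ and $(\cdot)^{ J }$ are exact on $\CV$. Hence $\bU$ and $\bD$ are exact on $\CV$. To see that $\bU$ actually sends $\CV(K)$ into $\CV(\tK)$, I would repeat the weight bookkeeping of step~(1) for a general $M\in\CV(K)$: using $(M\otimes L:\Delta(\mu))=\sum_{\lambda\in K}(M:\Delta(\lambda))\dim_k L_{\mu-\lambda}$, the same dichotomy on $\eta$ shows that the only surviving Vermas after truncation have highest weight $\lambda+\gamma$ for some $\lambda\in K$, so $\bU(M)\in\CV(\tK)$ with $(\bU(M):\Delta(\lambda+\gamma))=(M:\Delta(\lambda))$; the mirror computation handles $\bD\colon\CV(\tK)\to\CV(K)$. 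The only step that carries any real content is the initial weight bookkeeping for $L(\gamma)$: the divisibility $\gamma\in p^lX$ is precisely what forces every non-extremal weight of $L$ to differ from $\gamma$ by a positive multiple of $p^l$, which in turn is exactly what allows the second bullet of the hypothesis to push the unwanted Verma subquotients into $\tJ^\prime$ (respectively, out of $ J $); after that, everything reduces to the general formalism of Verma flags already set up in the preceding sections.
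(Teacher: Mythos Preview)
Your proposal is correct and follows essentially the same approach as the paper: both arguments use Lemma~\ref{lemma-FrobTwistSimples} to see that all weights of $L=L(\gamma)$ lie in $\gamma-p^l\DZ_{\ge 0}R^+$, combine Lemma~\ref{lemma-TransVermaMult} and Proposition~\ref{prop-VermaSub} to compute the Verma multiplicities of $\bU(\Delta(\lambda))$ and $\bD(\Delta(\tilde\lambda))$, and then invoke the separation hypothesis to eliminate all but the extremal subquotient. Your parametrization of the weights of $L$ by the ``defect'' $\eta$ (rather than by the weight of $L(\gamma_0)$ itself, as the paper does) is equivalent, and your explicit treatment of (2) and of the general $M$ in (3) just spells out what the paper leaves to the reader.
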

\begin{proof} We prove statement (1). Let $\lambda\in K$ and consider the module $\Delta(\lambda)\otimes L$. By Lemma \ref{lemma-TransVermaMult} it admits a Verma flag with multiplicities
$$
(\Delta(\lambda)\otimes L: \Delta(\mu))=\dim_k L_{\mu-\lambda}.
$$
So Proposition \ref{prop-VermaSub} implies that  $\bU(\Delta(\lambda))=(\Delta(\lambda)\otimes L)_{X\setminus\tJ^\prime}$ admits a Verma flag with 
$$
(\bU(\Delta(\lambda)): \Delta(\mu))=
\begin{cases}
 \dim_k L_{\mu-\lambda},&\text{ if $\mu\in X\setminus\tJ^\prime$}\\ 
 0,&\text{ else.}
\end{cases}
$$
Now suppose $\mu$ is such that $(\bU(\Delta(\lambda)): \Delta(\mu))\ne 0$. Then $\mu-\lambda$ is a weight of $L$ and $\mu\not\in\tJ^\prime$.
As $\gamma=p^l\gamma^\prime$ for some $\gamma^\prime\in X$ we have $L=L(\gamma^\prime)^{[l]}$ by Lemma \ref{lemma-FrobTwistSimples} and hence there exists a weight $\eta^\prime$ of $L(\gamma^\prime)$ with $\mu-\lambda=p^l\eta^\prime$. In particular, $\lambda+p^l\eta^\prime\not\in\tJ^\prime$. As $\tJ^\prime= J ^\prime+p^l\gamma^\prime$, this implies that $\lambda-p^l(\gamma^\prime-\eta^\prime)\not\in J ^\prime$. As $\gamma^\prime-\eta^\prime\ge 0$,  our assumption implies that $\eta^\prime=\gamma^\prime$. Hence $\mu=\lambda+p^l\gamma^\prime=\lambda+\gamma$, and, as $\dim_k L_{\gamma}=1$ we obtain statement (1). Statement  (2) is proven with similar arguments. 

Now (3) follows from (1) and (2) and the exactness of the functors $(\cdot)\otimes L$ and $(\cdot)\otimes \Gamma$ together with  Proposition  \ref{prop-TruncExact}. 
\end{proof}

\begin{theorem} \label{thm-equiv1} Suppose that there exists some $l>0$ such  that the triple $( J , J ^\prime,\gamma)$ satisfies the following.
 \begin{itemize}
 \item  $\gamma\in p^lX$,
 \item  for all $\lambda\in J $ and $\nu>0$ we have $\lambda-p^l\nu\in J ^\prime$, i.e. $K\cap (K-p^l\nu)=\emptyset$. 
\end{itemize} 
Then the functors $\bU$ and $\bD$ induce mutually inverse equivalences
$$
\CV(K)\cong\CV(\tK)
$$
of exact categories.
\end{theorem}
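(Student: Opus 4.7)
The plan is to leverage Lemma~\ref{lemma-updownVerma} as the base case: both $\bU$ and $\bD$ are exact between $\CV(K)$ and $\CV(\tK)$, and they shift Verma modules via $\lambda\leftrightarrow\lambda+\gamma$. I will upgrade this to an equivalence by constructing natural transformations $\eta\colon\id_{\CV(K)}\to\bD\bU$ and $\xi\colon\bU\bD\to\id_{\CV(\tK)}$, showing they are isomorphisms on Vermas, and then extending by induction on the length of a Verma flag.

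For the natural transformations, I use the rigid duality between $L$ and $\Gamma=L^\ast$, which makes $(\cdot)\otimes L$ left adjoint to $(\cdot)\otimes\Gamma$ on $\CO$, with unit $u_M\colon M\to M\otimes L\otimes\Gamma$ coming from coevaluation and counit $c_N\colon N\otimes\Gamma\otimes L\to N$ coming from evaluation. Combining $u_M$ with the projection $M\otimes L\otimes\Gamma\twoheadrightarrow(M\otimes L\otimes\Gamma)^J$ and the inclusion $\bU M\otimes\Gamma\hookrightarrow M\otimes L\otimes\Gamma$, I construct a morphism $\eta_M\colon M\to\bD\bU M=(\bU M\otimes\Gamma)^J$; symmetrically I build $\xi_N$ from $c_N$. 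Evaluating $\eta$ on a Verma $M=\Delta(\lambda)$ with $\lambda\in K$, Lemma~\ref{lemma-updownVerma} already identifies $\bD\bU\Delta(\lambda)$ with $\Delta(\lambda)$ as objects; tracing the construction of $\eta$ through the Verma-flag computation in that lemma's proof shows that $\eta_{\Delta(\lambda)}$ realises precisely this canonical isomorphism (up to a nonzero scalar arising from the normalisation of the duality pairing). The same holds for $\xi_{\Delta(\tilde\lambda)}$.

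To extend to all of $\CV(K)$ and $\CV(\tK)$, I induct on the length of a Verma flag. Given $M\in\CV(K)$ of length greater than $1$, Lemma~\ref{lemma-reorderVF} supplies a sub-Verma $\Delta(\lambda)\hookrightarrow M$ so that $M/\Delta(\lambda)\in\CV(K)$ has strictly smaller length. By exactness of $\bU$ and $\bD$ (Lemma~\ref{lemma-updownVerma}(3)), applying the composite $\bD\bU$ to $0\to\Delta(\lambda)\to M\to M/\Delta(\lambda)\to 0$ produces another short exact sequence; naturality of $\eta$ assembles this into a morphism of short exact sequences in which the outer verticals are isomorphisms by induction. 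The five lemma then forces $\eta_M$ to be an isomorphism. A symmetric argument handles $\xi$, so $\bU$ and $\bD$ are mutually inverse equivalences, and exactness of the equivalence is immediate from exactness of each functor.

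The main obstacle is the construction in step one: because $\bU$ involves a submodule truncation while $\bD$ involves a quotient truncation, the natural composition $M\to M\otimes L\otimes\Gamma\to(M\otimes L\otimes\Gamma)^J$ does not \emph{a priori} land in the subquotient $(\bU M\otimes\Gamma)^J$. The hypothesis $K\cap(K-p^l\nu)=\emptyset$ for all $\nu>0$ is what saves us: exactly as in the proof of Lemma~\ref{lemma-updownVerma}, all Verma factors of $M\otimes L\otimes\Gamma$ that do not come from $\bU M\otimes\Gamma$ have highest weight outside $J$, hence are annihilated by $(\cdot)^J$, and the required factorisation of $\eta_M$ through $\bD\bU M$ exists.
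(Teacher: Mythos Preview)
Your overall strategy matches the paper's: build natural transformations between $\id$ and the composites $\bD\bU$, $\bU\bD$ using the (co)evaluation maps for the pair $(L,\Gamma)$, verify they are isomorphisms on Verma modules via Lemma~\ref{lemma-updownVerma}, and then propagate to all of $\CV(K)$ using exactness. Your five-lemma induction on the length of a Verma flag is equivalent to the paper's one-line appeal to exactness of both functors.

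There is, however, a genuine gap in your construction of $\eta\colon\id\to\bD\bU$, and it lies exactly where you flag the ``main obstacle''. Your proposed resolution---that every Verma factor of $M\otimes L\otimes\Gamma$ not arising from $\bU M\otimes\Gamma$ has highest weight outside $J$---is false. The cokernel of $\bU M\otimes\Gamma\hookrightarrow M\otimes L\otimes\Gamma$ is $(M\otimes L)^{\tJ'}\otimes\Gamma$; a typical Verma factor of $(M\otimes L)^{\tJ'}$ has highest weight $\lambda+\eta$ with $\lambda\in K$ and $\eta$ a weight of $L$ strictly below $\gamma$, and tensoring with the weight $-\eta$ of $\Gamma$ produces a Verma factor of highest weight $\lambda$, which lies in $K\subset J$. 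So $((M\otimes L)^{\tJ'}\otimes\Gamma)^J$ is in general nonzero and your factorisation of $\eta_M$ is not established. A dual obstruction affects your $\xi$.

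The paper sidesteps this by reversing the directions of the natural transformations: it builds $\tau\colon\bD\bU\to\id_{\CV(K)}$ from the evaluation $L\otimes\Gamma\to k$, giving $\bU M\otimes\Gamma\hookrightarrow M\otimes L\otimes\Gamma\to M$, which factors through the quotient $(\bU M\otimes\Gamma)^J=\bD\bU M$ simply because $M\in\CO^J$; and it builds $\sigma\colon\id_{\CV(\tK)}\to\bU\bD$ from the coevaluation $k\to\Gamma\otimes L$, giving $N\to(N\otimes\Gamma)^J\otimes L$, which lands in the submodule $\bU\bD N$ because $N$ is generated by weight vectors whose weights lie in $\tK\subset X\setminus\tJ'$. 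With these directions the required factorisations are immediate, and then the nonvanishing on Verma modules is checked by hand exactly as you intend.
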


\begin{proof} We first construct a transformation $\tau\colon\bD\circ\bU\to\id_{\CV(K)}$ and show that this is an isomorphism, and then we construct a transformation $\sigma\colon\id_{\CV(\tK)}\to\bU\circ\bD$ and show that it is an isomorphism. So first let $M$ be an object in $\CV(K)$. We claim that there is a commutative diagram  of the following form:

\centerline{
\xymatrix{
(M\otimes L)_{X\setminus\tJ^\prime}\otimes \Gamma\ar[d]_{i\otimes \id_{\Gamma}}\ar[r]^b&((M\otimes L)_{X\setminus\tJ^\prime}\otimes \Gamma)^J \ar[d]^{\tau^M}\\
M\otimes L\otimes \Gamma\ar[r]^{\id_M\otimes\ev}&M.
}}\noindent
Here, $\ev\colon L\otimes \Gamma\to k_{triv}$ is the evaluation homomorphism, and $i$ is  the inclusion $(M\otimes L)_{X\setminus\tJ^\prime}\subset M\otimes L$. The composition of these two homomorphisms yields a homomorphism $(M\otimes L)_{X\setminus\tJ^\prime}\otimes L\to M$, and this must factor over the canonical quotient $b\colon (M\otimes L)_{X\setminus\tJ^\prime}\otimes L\to ((M\otimes L)_{X\setminus\tJ^\prime}\otimes L)^J $, as the weights of $M$ are contained in $ J $. The resulting homomorphism  $\tau^M$ is functorial in $M$, and hence we obtain a natural transformation $\tau\colon \bD\circ\bU\to\id_{\CV(K)}$ of endofunctors on $\CV(K)$. 

We now show that this is an isomorphism of functors. As both functors are exact it is sufficient to show that $\tau$ is an isomorphism when evaluated at a Verma module $\Delta(\mu)$ with $\mu\in K$. By Lemma \ref{lemma-updownVerma} we have  $(\bD\circ\bU) \Delta(\mu)\cong\Delta(\mu)$, and hence it suffices to show that the homomorphism $\tau^{\Delta(\mu)}$ is non-zero. For this, we consider again the diagram above, with $M=\Delta(\mu)$. It suffices to show that the composition $(\id_M\otimes \ev)\circ (i\otimes\id_{\Gamma})$ is non-zero. If we denote by $v\in L$ a non-zero vector of (maximal) weight $\gamma$ and $m\in\Delta(\mu)$ a non-zero vector of weight $\mu$, then $m\otimes v$ is contained in $(\Delta(\mu)\otimes L)_{X\setminus\tJ^\prime}$, as it is of weight $\mu+\gamma$ which is not contained in $\tJ^\prime$. For an element $\phi\in \Gamma$ with $\phi(v)\ne 0$ we then have $(\id_{\Delta(\mu)}\otimes\ev)\circ (i\otimes\id_{\Gamma})(m\otimes v\otimes\phi)=\phi(v)m$, which is non-zero. So $\tau^{\Delta(\mu)}$ is indeed non-zero. 

Now let $N$ be an object in $\CV(\tK)$. We claim that there is a commutative diagram  of the following form:

\centerline{
\xymatrix{
N\ar[r]^{\id_N\otimes\coev}\ar[d]_{\sigma^N}&N\otimes \Gamma\otimes L \ar[d]^{b\otimes\id_L}\\
((N\otimes \Gamma)^J \otimes L)_{X\setminus\tJ^\prime}\ar[r]^i&(N\otimes \Gamma)^J \otimes L.
}
}
\noindent
Here, $\coev\colon k_{triv}\to \Gamma\otimes L$ is the coevaluation homomorphism, and $b$ is the canonical quotient $N\otimes \Gamma\to (N\otimes \Gamma)^J $. The resulting homomorphism $N\to (N\otimes \Gamma)^J \otimes L$ factors over the inclusion $i\colon  ((N\otimes \Gamma)^J \otimes L)_{X\setminus\tJ^\prime}\to (N\otimes \Gamma)^J \otimes L$, as the highest weights of the Verma subquotients of $N$ (and hence the weights of some generators of $N$) are supposed to be contained in $\tK$, so are not contained in $\tJ^\prime$. The resulting homomorphism $\sigma^N\colon N\to ((N\otimes \Gamma)^J \otimes L)_{X\setminus\tJ^\prime}=\bU\circ\bD(N)$ is functorial in $N$ and yields a transformation $\sigma\colon \id_{\CV(\tK)}\to\bU\circ\bD$. As both functors are exact, and as $\bU\circ\bD(\Delta(\tilde\lambda))\cong\Delta(\tilde\lambda)$, it suffices, as before, to show that $\sigma^{\Delta(\tilde\lambda)}$ is non-zero for all $\tilde\lambda\in\tK$. 

Consider again the diagram above. It suffices to show that the composition $\Delta(\tilde\lambda)\xrightarrow{\id_{\Delta(\tilde\lambda)}\otimes\coev} \Delta(\tilde\lambda)\otimes \Gamma\otimes L\xrightarrow{b\otimes\id_L} (\Delta(\tilde\lambda)\otimes \Gamma)^J \otimes L$ is non-zero. Fix a basis of weight vectors $\{v_0,\dots,v_d\}$ of $L$ and denote by  $\nu_i$  the weight of $v_i$. Suppose that $\nu_0=\gamma$, so $\nu_i\ne \gamma$ for all $i\ne 0$. Let $\{v_0^\ast,\dots,v_d^\ast\}$ be the dual basis of $\Gamma$. Then $(b\otimes\id_L)\circ (\id_{\Delta(\tilde\lambda)}\otimes\coev)(m)$ is the image of $\sum m\otimes v_i^\ast\otimes v_i$ in $(\Delta(\tilde\lambda)\otimes \Gamma)^J \otimes L$. Suppose that $m\in\Delta(\tilde\lambda)_{\tilde\lambda}$ is non-zero. Then $m\otimes v_i^\ast$ is of weight $\tilde\lambda-\nu_i$. Suppose that the weight $\tilde\lambda-\nu_i$ is contained in $ J $. Then $(\tilde\lambda-\gamma)+\gamma-\nu_i$ is contained in $ J $. As $\tilde\lambda-\gamma\in J $  and as $\gamma=p^l\gamma^\prime$ and $\nu_i=p^l\nu_i^\prime$ our assumption implies that $\nu_i=\gamma$, i.e. $i=0$. Hence $(b\otimes\id_L)\circ (\id_{\Delta(\tilde\lambda)}\otimes\coev)(m)$ is the image of $m\otimes v_0^\ast\otimes v_0$ in $(\Delta(\tilde\lambda)\otimes \Gamma)^J \otimes L$. As in the proof of Lemma \ref{lemma-updownVerma} we see that $(m\otimes v_0^\ast)^J $ is a generator of $(\Delta(\tilde\lambda)\otimes \Gamma)^J \cong\Delta(\tilde\lambda-\gamma)$, hence non-zero. So $(b\otimes\id_L)\circ (\id_{\Delta(\tilde\lambda)}\otimes\coev)(m)$ is non-zero. This finishes the proof.
\end{proof}

\section{Subquotient categories}

Let us recall the notion of subquotient categories, as developed in \cite{G}. Suppose $\CA$ is an abelian category. A full subcategory $\CN$ that has the property that for any short exact sequence $0\to A\to B\to C\to 0$ in $\CA$ we have $B\in\CN$ if and only if $A,C\in\CN$, is called a {\em Serre subcategory}. If $\CN$ is a Serre subcategory of $\CA$, then we construct the Serre quotient $\CA/\CN$ as follows. The class of objects of $\CA/\CN$ is the class of objects in $\CA$, and  the set of homomorphisms is constructed as follows. Suppose that $M_2\subset M_1\subset M$ are two subobjects of $M$ such that $M/M_1$ and $M/M_2$ are contained in $\CN$, and that $N_1\subset N_2\subset N$ are two subobjects of $N$ that are both contained in $\CN$. We then have canonical homomorphisms
$$
\Hom_{\CA}(M,N)\to \Hom_\CA(M_1,N/N_1)\to\Hom_\CA(M_2,N/N_2).
$$
So we can define
$$
\Hom_{\CA/\CN}(M,N)=\varinjlim \Hom_\CA(M^\prime, N/N^\prime),
$$
where $M^\prime$ ranges over all subobjects of $M$ such that $M/M^\prime$ is contained in $\CN$, and $N^\prime$ ranges over all subobjects on $N$ that are contained in $\CN$. This construction comes with a canonical map $\Hom_\CA(M,N)\to\Hom_{\CA/\CN}(M,N)$. There is a  composition law for morphisms in $\CA/\CN$ such that we obtain a {\em 	quotient functor}  $\bT\colon\CA\to\CA/\CN$.
The following statement summarises some of the main results in \cite{G}. 

\begin{theorem}\cite{G} \begin{enumerate} \item The category $\CA/\CN$ is abelian and the functor $\bT\colon\CA\to\CA/\CN$ is exact.
\item We have $\bT(N)=0$ in $\CA/\CN$ if and only if $N\in\CN$. 
\item For any short exact sequence $S$ in $\CA/\CN$ there is a short exact sequence $S^\prime$ in $\CA$ such that $\bT(S^\prime)$ is isomorphic to $S$.
\end{enumerate}
\end{theorem}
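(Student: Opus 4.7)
The plan is to verify each of the three statements directly from the construction of $\CA/\CN$. The key preliminary observation is that the indexing system for
\[
\Hom_{\CA/\CN}(M,N)=\varinjlim \Hom_{\CA}(M^\prime,N/N^\prime)
\]
is filtered: given two pairs $(M_1^\prime,N_1^\prime)$ and $(M_2^\prime,N_2^\prime)$, the pair $(M_1^\prime\cap M_2^\prime,\,N_1^\prime+N_2^\prime)$ refines both, and the Serre property of $\CN$ ensures $M/(M_1^\prime\cap M_2^\prime)\in\CN$ (via the injection into $M/M_1^\prime\oplus M/M_2^\prime$) and $N_1^\prime+N_2^\prime\in\CN$ (as a quotient of $N_1^\prime\oplus N_2^\prime$). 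This filtered structure makes composition well-defined on the colimit and equips each Hom-set with a natural abelian group structure that is preserved by $\bT$.

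I would begin with statement (2), which is the most direct. If $\bT(N)=0$, then $\id_N$ represents the zero morphism in $\Hom_{\CA/\CN}(N,N)$, so there exist $N^\prime\subset N$ with $N/N^\prime\in\CN$ and $N^{\prime\prime}\subset N$ with $N^{\prime\prime}\in\CN$ such that the composition $N^\prime\hookrightarrow N\twoheadrightarrow N/N^{\prime\prime}$ vanishes. This forces $N^\prime\subset N^{\prime\prime}$, hence $N^\prime\in\CN$, and with $N/N^\prime\in\CN$ the Serre property yields $N\in\CN$. The converse is immediate from the choice $N^\prime=0$, $N^{\prime\prime}=N$.

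For (1), I would construct kernels and cokernels in $\CA/\CN$ by representing each morphism $f$ by some $f_0\colon M^\prime\to N/N^{\prime\prime}$ and defining $\ker f$ (respectively $\coker f$) as the $\bT$-image of $\ker f_0$ (respectively $\coker f_0$). After checking independence of representative (again via the filtered-colimit argument), one verifies that every monomorphism in $\CA/\CN$ equals the kernel of its cokernel, and dually. Exactness of $\bT$ then follows automatically, because $\bT$ sends kernels to kernels and cokernels to cokernels by construction.

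The main obstacle will be (3). Given a short exact sequence $0\to\bT(A)\xrightarrow{\bar f}\bT(B)\xrightarrow{\bar g}\bT(C)\to 0$ in $\CA/\CN$, the plan is to represent $\bar f$ and $\bar g$ by honest morphisms $f_0$ and $g_0$ in $\CA$ between appropriate subobjects and quotients, and then surgically modify the middle term so that (i) the two maps compose to zero on the nose and (ii) the kernel of the modified $g$ coincides with the image of the modified $f$. Each failure of exactness at the end terms or in the middle is, by the definition of $\bar f$, $\bar g$ being iso/epi/mono in the quotient, a subquotient of an object in $\CN$, and hence can be absorbed into the representing pair by passing to a finer refinement. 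The technical difficulty lies in arranging that the two chosen representatives are mutually compatible, so that one can speak of $g_0\circ f_0$ on a common domain; this is precisely where the filtered nature of the indexing system together with the stability of $\CN$ under subobjects, quotients, and extensions must be used in tandem.
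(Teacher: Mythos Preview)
The paper does not prove this theorem; it is quoted without proof from Gabriel's thesis \cite{G}, so there is no in-paper argument to compare against. Your outline is essentially Gabriel's own approach and is correct as a plan: the filtered-colimit description of morphisms, the direct verification of (2), the construction of kernels and cokernels via representatives for (1), and the refinement-and-surgery strategy for (3) are all standard and sound. The places that would need the most care in a full write-up are the well-definedness and abelianness checks in (1) (in particular that every monomorphism is the kernel of its cokernel, which is where the Serre closure properties of $\CN$ really do the work) and the compatibility step in (3) that you flag yourself; but as a proposal there is no genuine gap.
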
 


\subsection{Subquotients for the category $\CO$} Let $ J ^\prime\subset J \subset X$ be  open subsets. Then $\CO^{ J ^\prime}$ is a Serre subcategory of $\CO^J $, and we are interested in the quotient $\CO^{ J }/\CO^{ J ^\prime}$. Here we are in a particularly nice situation: let $M$ and $N$ be objects of $\CO^J $. Then there is a {\em minimal} submodule  $M_-$ of $M$ with the property that $M/M_-$ is contained in $\CO^{ J ^\prime}$. It is the submodule $M_{X\setminus J ^\prime}$ defined earlier, i.e. the submodule of $M$ generated by all weight spaces $M_\lambda$ with $\lambda\not\in J ^\prime$. There is also a {\em maximal} subobject $N_+$ of $N$ that is contained in $\CO^{ J ^\prime}$, it is simply the union of all submodules of $N$ contained in $\CO^{ J ^\prime}$. So the limit in the definition of the $\Hom$-spaces in $\CO^{ J }/\CO^{ J ^\prime}$ stabilizes and we obtain
$$
\Hom_{\CO^J /\CO^{ J ^\prime}}(\bT M,\bT N)=\Hom_{\CO^J }(M_-,N/N_+).\leqno{(\ast)}
$$

\begin{proposition} \label{prop-projinquot}  Suppose that $ J $ is quasi-bounded.   Let $\lambda$ be an element in $ J \setminus J ^\prime$. 
\begin{enumerate}
\item The image of the projective cover $P^{ J }(\lambda)\to L(\lambda)$ of $L(\lambda)$ under the functor $\bT $ is a projective cover of $\bT  L(\lambda)$ in $\CO^J /\CO^{ J ^\prime}$. 

\item Let $N$ be an object in $\CO^{ J }$. Then the induced homomorphism 
$$
\Hom_{\CO^{ J }}(P^{ J }(\lambda),N)\to\Hom_{\CO^J /\CO^{ J ^\prime}}(\bT  P^{ J }(\lambda),\bT  N)
$$
is an isomorphism.
\item The category $\CO^J /\CO^{ J ^\prime}$ has enough projectives. 
\end{enumerate}
\end{proposition}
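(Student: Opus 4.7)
The plan is to prove the three parts in the order (2), (1), (3), since the Hom-space identification in (2) is the technical core from which the other items follow by categorical manipulation.

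For (2), I would exploit the description $(\ast)$ of the Hom-spaces in $\CO^J/\CO^{J'}$, which reduces the claim to two assertions. First, $P^J(\lambda)_- = P^J(\lambda)$: since $\lambda \notin J'$, the canonical generator of $P^J(\lambda)$ at weight $\lambda$ lies in $P^J(\lambda)_-$, so the composition $P^J(\lambda)_- \hookrightarrow P^J(\lambda) \twoheadrightarrow L(\lambda)$ is surjective, and essentialness of the projective cover from Theorem \ref{thm-projcov} forces the inclusion itself to be surjective. Second, $\Hom_{\CO^J}(P^J(\lambda), N_+) = 0$: any nonzero homomorphism would produce a nonzero quotient of $P^J(\lambda)$ sitting inside $N_+ \in \CO^{J'}$, yet every nonzero quotient of $P^J(\lambda)$ has $L(\lambda)$ as a composition factor by the preceding proposition, while $L(\lambda)$ cannot occur in $\CO^{J'}$ since $\lambda \notin J'$. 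The long exact sequence obtained from $0 \to N_+ \to N \to N/N_+ \to 0$, together with projectivity of $P^J(\lambda)$ in $\CO^J$, then yields $\Hom_{\CO^J}(P^J(\lambda), N) \cong \Hom_{\CO^J}(P^J(\lambda), N/N_+)$, and $(\ast)$ gives the claimed isomorphism.

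Part (1) follows by unpacking (2). Projectivity of $\bT P^J(\lambda)$ in the quotient comes from the fact that every short exact sequence in $\CO^J/\CO^{J'}$ lifts to one in $\CO^J$; applying $\Hom_{\CO^J/\CO^{J'}}(\bT P^J(\lambda), -)$, which by (2) agrees with $\Hom_{\CO^J}(P^J(\lambda), -)$ on the lifted sequence, preserves exactness by projectivity of $P^J(\lambda)$ in $\CO^J$. The target $\bT L(\lambda)$ is nonzero and simple, since simple objects of the Serre quotient are exactly the images $\bT L(\mu)$ with $\mu \in J \setminus J'$. Using (2) one computes $\Hom_{\CO^J/\CO^{J'}}(\bT P^J(\lambda), \bT L(\mu)) = \Hom_{\CO^J}(P^J(\lambda), L(\mu))$, which vanishes unless $\mu = \lambda$; hence $\bT L(\lambda)$ is the unique simple quotient of $\bT P^J(\lambda)$. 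Moreover, $\End_{\CO^J/\CO^{J'}}(\bT P^J(\lambda)) = \End_{\CO^J}(P^J(\lambda))$ is local by Lemma \ref{lemma-endlocal}, so $\bT P^J(\lambda)$ is indecomposable. To conclude essentialness of the cover, I would observe that any $g \colon M \to \bT P^J(\lambda)$ with epic composition that fails to be epic would yield a surjection $\bT P^J(\lambda)/g(M) \twoheadrightarrow \bT L(\lambda)$; the resulting composition $\bT P^J(\lambda) \to \bT L(\lambda)$ must agree with the cover up to a nonzero scalar by one-dimensionality of $\Hom_{\CO^J/\CO^{J'}}(\bT P^J(\lambda), \bT L(\lambda))$, contradicting the fact that this composition must be zero on $g(M)$.

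For (3), any object of $\CO^J/\CO^{J'}$ has the form $\bT M'$ with $M' \in \CO^J$. Using the universal property $\Hom_{\CO^J}(Q^J(\lambda), -) = (-)_\lambda$ established in the proof of Theorem \ref{thm-projcov}, I would build a surjection $\bigoplus_{\lambda \in J} Q^J(\lambda)^{\oplus I_\lambda} \twoheadrightarrow M'$ with $I_\lambda$ indexing a basis of $M'_\lambda$. Each $Q^J(\lambda)$ decomposes into a finite direct sum of indecomposable projectives $P^J(\mu)$; applying the exact quotient functor $\bT$, which preserves arbitrary direct sums as a localization of a Grothendieck category, yields a surjection from a direct sum of $\bT P^J(\mu)$'s onto $\bT M'$. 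Arbitrary direct sums of projectives remain projective in any abelian category with coproducts, so (3) follows. The main obstacle is establishing (2); once that Hom-space identification is in place, the subsequent steps are largely categorical bookkeeping.
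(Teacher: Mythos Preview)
Your approach to (2) and (1) is essentially the paper's: prove (2) first via $(\ast)$ by showing $P^J(\lambda)_- = P^J(\lambda)$ and $\Hom_{\CO^J}(P^J(\lambda),N_+)=0$, then deduce projectivity, indecomposability (local endomorphism ring), and essentialness of $\bT P^J(\lambda)\to\bT L(\lambda)$. Your essentialness argument via the unique simple quotient is a minor variant of the paper's, which instead reuses the lifting argument from the last paragraph of the proof of Theorem~\ref{thm-projcov}.

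There is a small gap in your argument for (3). You cover $M'$ by $\bigoplus_{\lambda\in J}Q^J(\lambda)^{\oplus I_\lambda}$ and then decompose each $Q^J(\lambda)$ into indecomposable summands $P^J(\mu)$. But for $\lambda\in J'$ the summands $P^J(\mu)$ that occur may have $\mu\in J'$, and for such $\mu$ you have not shown that $\bT P^J(\mu)$ is projective in the quotient (part~(1) only treats $\mu\in J\setminus J'$). The fix is immediate: replace $M'$ by $(M')_-=M'_{X\setminus J'}$, which has the same image under $\bT$ and is generated by its weight spaces $M'_\lambda$ with $\lambda\in J\setminus J'$; then you only need $Q^J(\lambda)$ with $\lambda\in J\setminus J'$, and since $J'$ is open any $\mu\geq\lambda$ with $\lambda\notin J'$ also satisfies $\mu\notin J'$, so all the $P^J(\mu)$ appearing do lie in the range where (1) applies. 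The paper, by contrast, dispatches (3) in one line by appealing to (1) and the fact that the $\bT L(\lambda)$ with $\lambda\in J\setminus J'$ exhaust the simple isomorphism classes.
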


\begin{proof} First we prove statement (2). We adopt the notation from the paragraph before the statement of this proposition. As $\lambda$ is not contained in $ J ^\prime$ and $N_+$ is an object in $\CO^{ J ^\prime}$ we have $(N_+:L(\lambda))=0$, hence there is no non-trivial homomorphism from $P^J (\lambda)$ to $N_+$. From this and the projectivity of  $P^J (\lambda)$  in $\CO^{ J }$ we deduce that the canonical homomorphism $\Hom_{\CO^J }(P^J (\lambda), N)\to \Hom_{\CO^J }(P^J (\lambda), N/N_+)$ is an isomorphism. Now any submodule $A$ of $P^{ J }(\lambda)$ with the property that $P^{ J }(\lambda)/A$ is contained in  $\CO^{ J ^\prime}$ must contain the $\lambda$-weight space of $P^J (\lambda)$. As $P^J (\lambda)$ is generated by this weight space, we have $P^J (\lambda)_-= P^J (\lambda)$. Equation $(\ast)$ now yields (2).

As all short exact sequences in $\CO^{ J }/\CO^{ J ^\prime}$ can be obtained from short exact sequences in $\CO^{ J }$ by applying the quotient functor $\bT$, we deduce that $\bT P^J (\lambda)$ is projective in $\CO^{ J }/\CO^{ J ^\prime}$ from (2). As $\bT$ is exact, the morphism $\bT P^J (\lambda)\to \bT L(\lambda)$ is an epimorphism. Now $\End_{\CO^J /\CO^{ J ^\prime}}(\bT P^J (\lambda))\cong \End_{\CO^J }(P^J (\lambda))$ by (2), so this is a local ring, so $\bT P^J (\lambda)$ is indecomposable. The fact that $\bT P^J (\lambda)\to \bT L(\lambda)$ is a projective cover now can be proven using the arguments in the last paragraph of the proof of Theorem \ref{thm-projcov}.

Statement (3) follows from (1), since  the objects $\{\bT L(\lambda)\}_{\lambda\in J \setminus J ^\prime}$ represent the simple isomorphism classes in $\CO^J /\CO^{ J ^\prime}$.\end{proof}

\subsection{Subquotients and locally closed subsets}
Again we suppose that $ J $ is quasi-bounded. Set $K:= J \setminus J ^\prime$ and consider the composition of functors
$$
\bV\colon \CV(K)\subset\CO^{ J }\xrightarrow{\bT}\CO^{ J }/\CO^{ J ^\prime}.
$$
Note that the projectives $P^J(\nu)$ with $\nu\in K$ are objects in $\CV(K)$, and that 
$$
\Hom_{\CV(K)}(P^{ J }(\lambda),P^{ J }(\mu))=\Hom_{\CO^{ J }/\CO^{ J ^\prime}}(\bT P^{ J }(\lambda),\bT P^{ J }(\mu))
$$
for all $\lambda,\mu\in K$ by Proposition \ref{prop-projinquot}. As the objects $\{\bT L(\lambda)\}_{\lambda\in K}$ represent the simple isomorphism classes in $\CO^{ J }/\CO^{ J ^\prime}$, and as the morphisms $\bT( P^{ J }(\lambda)\to L(\lambda))$ are projective covers, we deduce that $\CO^{ J }/\CO^{ J ^\prime}$ is the abelianization of the exact category $\CV(K)$. In particular, this quotient category does not depend on the choice of $ J $ and $ J ^\prime$, but only on the locally closed subset $K$. So henceforth we write $\CO_{[K]}$ for this category.

\subsection{The shift functor on subquotients} We return to the setup in Section \ref{subsec-subcat}: we fix open subsets $ J ^\prime\subset J $ of $X$ and suppose that $ J$ is quasi-bounded, we choose a dominant weight  $\gamma\in X^+$  and set $\tJ:= J +\gamma$ and $\tJ^\prime:= J ^\prime+\gamma$, $K:= J \setminus J ^\prime$ and $\tK=\tJ\setminus\tJ^\prime=K+\gamma$. We again consider the functor
$$
(\cdot)\otimes L(\gamma)\colon \CO\to\CO.
$$
It maps the category $\CO^{ J }$, resp. $\CO^{ J ^\prime}$, into the category $\CO^{\tJ}$, resp. $\CO^{\tJ^\prime}$  and hence induces a functor
$$
\bZ\colon \CO_{[K]}\to\CO_{[\tK]}.
$$

\begin{theorem} Suppose that there exists some $l>0$ such  that the triple $( J , J ^\prime,\gamma)$ satisfies the following.
 \begin{itemize}
 \item  $\gamma\in p^lX$,
 \item  for all  $\nu>0$ we have $K\cap (K-p^l\nu)=\emptyset$. 
\end{itemize} 
Then the functor $\bZ$ is an equivalence of categories. 
\end{theorem}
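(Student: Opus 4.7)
My plan is to reduce the statement to Theorem \ref{thm-equiv1}, which supplies mutually inverse exact equivalences $\bU\colon\CV(K)\to\CV(\tK)$ and $\bD\colon\CV(\tK)\to\CV(K)$, and to exploit the identification (established in the subsection preceding the theorem) of $\CO_{[K]}$ as the abelianization of the exact category $\CV(K)$, and similarly for $\tK$.

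First I would verify that $(\cdot)\otimes L(\gamma)$ really descends to a functor $\bZ\colon\CO_{[K]}\to\CO_{[\tK]}$. For $M\in\CO^J$, every weight of $M\otimes L(\gamma)$ has the form $\lambda+\mu$ with $\lambda\in J$ and $\mu\le\gamma$, hence $\lambda+\mu\le\lambda+\gamma\in\tJ$; since $\tJ$ is open (downward closed), $\lambda+\mu\in\tJ$. The same argument sends $\CO^{J^\prime}$ into $\CO^{\tJ^\prime}$, and exactness of the tensor product then yields $\bZ$. Next, let $\bV_K$ and $\bV_{\tK}$ denote the quotient functors from $\CV(K)$ and $\CV(\tK)$ into the respective subquotient categories. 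For $M\in\CV(K)$, the canonical inclusion $\bU M=(M\otimes L(\gamma))_{X\setminus\tJ^\prime}\hookrightarrow M\otimes L(\gamma)$ has cokernel $(M\otimes L(\gamma))^{\tJ^\prime}\in\CO^{\tJ^\prime}$, and therefore becomes an isomorphism after applying $\bT$. This produces a natural isomorphism
\[
\bV_{\tK}\circ\bU\;\xrightarrow{\sim}\;\bZ\circ\bV_K
\]
of exact functors $\CV(K)\to\CO_{[\tK]}$.

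Finally, I would invoke the universal property of the abelianization: the exact functors $\bV_{\tK}\circ\bU$ and $\bV_K\circ\bD$ extend uniquely along $\bV_K$ and $\bV_{\tK}$ to exact functors $\overline{\bU}\colon\CO_{[K]}\to\CO_{[\tK]}$ and $\overline{\bD}\colon\CO_{[\tK]}\to\CO_{[K]}$, and uniqueness of these extensions combined with Theorem \ref{thm-equiv1} forces $\overline{\bU}$ and $\overline{\bD}$ to be mutually inverse. The natural isomorphism of the previous step then identifies $\overline{\bU}$ with $\bZ$. The main obstacle I anticipate is making the universal-property step airtight. I would handle it concretely through the projective generators $\bT P^J(\lambda)$ of $\CO_{[K]}$, $\lambda\in K$: their preimages $P^J(\lambda)_{X\setminus J^\prime}$ lie in $\CV(K)$ by Proposition \ref{prop-VermaSub}, and Proposition \ref{prop-projinquot}(2) identifies the Hom-spaces in $\CO_{[K]}$ with those in $\CO^J$, so any exact functor out of $\CV(K)$ into an abelian category is determined, up to unique natural isomorphism, by its restriction to these generators; the required extensions $\overline{\bU},\overline{\bD}$ can then be produced explicitly.
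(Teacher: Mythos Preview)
Your proposal is correct and follows essentially the same route as the paper: both arguments establish the natural isomorphism $\bV_{\tK}\circ\bU\cong\bZ\circ\bV_K$ (via the observation that the inclusion $(M\otimes L(\gamma))_{X\setminus\tJ^\prime}\hookrightarrow M\otimes L(\gamma)$ becomes an isomorphism in the quotient), and then reduce to Theorem \ref{thm-equiv1} by passing through the projective generators and Proposition \ref{prop-projinquot}. One minor simplification: you need not pass to $P^J(\lambda)_{X\setminus J^\prime}$, since for $\lambda\in K$ every Verma subquotient $\Delta(\mu)$ of $P^J(\lambda)$ already satisfies $\mu\ge\lambda$ and $\mu\in J$, hence $\mu\notin J^\prime$ (as $J^\prime$ is open) and so $P^J(\lambda)\in\CV(K)$ on the nose.
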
 
\begin{proof} Consider the following composition of functors:
\begin{align*}
\bV&\colon\CV(K)\to\CO^{ J }\to\CO^{ J }/\CO^{ J ^\prime}=\CO_{[K]},\\
\tbV&\colon\CV(\tK)\to\CO^{\tJ}\to\CO^{\tJ}/\CO^{\tJ^\prime}=\CO_{[\tK]}.
\end{align*}

Let $\tI^\prime$ be the complement of $\tJ^\prime$ in $X$. For all objects $M$ in $\CO^{\tJ}$  the inclusion $M_{\tI^\prime}\subset M$ has a quotient in $\CO^{\tJ^\prime}$, so this inclusion is an isomorphism in $\CO^{\tJ}/\CO^{\tJ^\prime}$. Hence the square on the right of the diagram 

\centerline{
\xymatrix{
\CV(\tK)\ar[r]^{\tbV}&\CO^{\tJ}/\CO^{\tJ^\prime}\ar[r]^{\id}&\CO^{\tJ}/\CO^{\tJ^\prime}\\\
\CV(K)\ar[u]^{\bU=((\cdot)\otimes L(\gamma))_{\tI^\prime}}\ar[r]^{\bV}&\CO^{ J }/\CO^{ J ^\prime}\ar[u]_{((\cdot)\otimes L(\gamma))_{\tI^\prime}}\ar[r]^{\id}&\CO^{ J }/\CO^{ J ^\prime}\ar[u]_{\bZ=(\cdot)\otimes L(\gamma)}
}
}
\noindent
commutes, making the whole diagram commutative.
Now both categories $\CO^{\tJ}/\CO^{\tJ^\prime}$ and $\CO^{ J }/\CO^{ J ^\prime}$ are abelian and have enough projectives. In order to show that $\bZ$ is an equivalence, it suffices to show that is essentially surjective on projectives and fully faithful on projectives. Using Proposition \ref{prop-projinquot} and the above commutative diagram, it suffices to show this statement for the functor $\bU\colon\CV(K)\to\CV(\tK)$. But $\bU$ is an equivalence by Theorem \ref{thm-equiv1}.
\end{proof}

\end{document}